\newcommand{\N}{\mathbb{N}}
\newcommand{\R}{\mathbb{R}}
\newcommand{\C}{\mathcal{C}}
\newcommand{\D}{\mathcal{D}}
\newcommand{\dz}{\, {\rm d} z}
\newcommand{\dt}{\, {\rm d} t}
\newcommand{\dtau}{\, {\rm d} \tau}
\newcommand{\dxi}{\, {\rm d} \xi}
\newcommand{\dzeta}{\, {\rm d} \zeta}
\newcommand{\eps}{\varepsilon}
\newcommand{\beq}{\begin{equation}}
\newcommand{\eeq}{\end{equation}}
\newcommand{\dis}{\displaystyle}
\newtheorem{lemma}{Lemma}
\newtheorem{thm}[lemma]{Theorem}
\newtheorem{prop}[lemma]{Proposition}
\theoremstyle{definition}
\newtheorem{defi}[lemma]{Definition}
\newtheorem{rmk}[lemma]{Remark}
\newcommand{\cred}{\textcolor{red}}
\newcommand{\cblue}{\textcolor{blue}}
\begin{document}

\title[Traveling waves for discontinuous reaction-diffusion-advection equations]{Traveling waves for highly degenerate and singular reaction-diffusion-advection equations with discontinuous coefficients}

\author[U. Guarnotta]{Umberto Guarnotta}
\address[U. Guarnotta]{Dipartimento di Ingegneria Industriale e Scienze Matematiche, Università Politecnica delle Marche, Via Brecce Bianche 12,
60131 Ancona, Italy}
\email{u.guarnotta@univpm.it}

\author[C. Marcelli]{Cristina Marcelli}
\address[C. Marcelli]{Dipartimento di Ingegneria Industriale e Scienze Matematiche, Università Politecnica delle Marche, Via Brecce Bianche 12,
60131 Ancona, Italy}
\email{c.marcelli@staff.univpm.it}

\begin{abstract}
Sufficient conditions for either existence or non-existence of traveling wave solutions for a general quasi-linear reaction-diffusion-convection equation, possibly highly degenerate or singular, with discontinuous coefficients are furnished. Under an additional hypothesis on the convection term, the set of admissible wave speeds is characterized in terms of the minimum wave speed, which is estimated through a double-sided bound.
\end{abstract}

\maketitle

{
\let\thefootnote\relax
\footnote{{\bf{MSC 2020}}: 35C07; 35K57; 35K59; 35K65.}
\footnote{{\bf{Keywords}}: Traveling waves; Degenerate equations; Singular equations; Quasi-linear parabolic equations; Discontinuous coefficients; Singular terms; Gamma-convergence.}
\footnote{\Letter \quad Corresponding author: Umberto Guarnotta (u.guarnotta@univpm.it).}
}
\setcounter{footnote}{0}

\section{Introduction}

\bigskip
As it is well-known, front-type solutions for reaction equations are a research area covered by many papers. Quite a wide literature is available on the existence and the properties of {\it traveling wave solutions} (for short t.w.s.'s) for many types of reaction-diffusion equations: monostable or bistable, degenerate or doubly degenerate, with advection or accumulation effects, or with changing-sign diffusivities, governed by the $p$-Laplacian or other type of differential operators (see, e.g. the recent papers \cite{AV1,AV2,BCM,CMP,CMS,GS2,Ga}). 

Such a great variety of the dynamics investigated concerns equations with continuous coefficients, and consequently regular t.w.s.'s. Nevertheless, there are some models involving discontinuous coefficients (see, e.g., \cite{SZW} for a model with a discontinuous diffusivity, and \cite{ARV, DHT, FN, Te, Va} for models with a reaction term depending on the Heaviside function).
This motivates the interest in studying t.w.s.'s for reaction-diffusion equations with discontinuous coefficients.

In \cite{DJKZ} (see also \cite{DZ}) the authors considered the monostable equation
\[u_t+f(u)u_x=(d(u)|u_x|^{p-2}u_x)_x + h(u)\]
with \(p>1\), where the reaction term \(h\) is continuous, but the diffusivity \(d\) is lower-semicontinuous and the convective term \(f\) is piecewise continuous. Both \(d\) and \(f\) may have jump-type discontinuities. The authors proved the existence of infinitely many t.w.s.'s having speed \(c\) in a halfline \([c^*,+\infty)\) and provided an estimate for the unknown threshold value \(c^*\).

According to our knowledge, no papers concern the study of t.w.s.'s for general equations involving discontinuous reaction terms \(h\) too, and this is the main goal of the present paper.

Indeed, 
we consider a very general monostable reaction-diffusion-convection equation involving also a nonlinear accumulation term \(g\), that is,
\begin{equation}
\label{pdeprob}
f(u)u_x + g(u)u_t = (d(u)|u_x|^{p-2}u_x)_x + h(u), \quad (x,t)\in\R^2,
\end{equation}
with $p>1$, where all the coefficients \(f,g,d,h\) are just piecewise continuous functions, having at most a finite number of discontinuities.

More precisely,
let $\hat{C}[0,1]$ [resp., $\hat{C}(0,1)$] be the set of the piecewise continuous functions $\mu:[0,1]\to\R$ [resp., $\mu:(0,1)\to \R$] having a finite set $\D(\mu):=\{\gamma_1,\ldots,\gamma_n\}\subset (0,1)$ of discontinuity points. In what follows, we do not consider removable discontinuities, that is, points $\xi_0\in(0,1)$ such that $\mu(\xi)\to l\in\R$ as $\xi\to\xi_0$ but $l\neq\mu(\xi_0)$, since our solutions will be strictly monotone and the value of a coefficient at a single point does not influence the dynamics. In other words, we will identify  functions which differ, at most, in a finite number of points.

Throughout the paper, we assume that
\begin{enumerate}[label={$({\rm H}_\arabic*)$},ref={${\rm H}_\arabic*$}]
\item\label{ip:hatC} $f,g,h\in\hat{C}[0,1]\cap L^\infty(0,1)$, $d\in \hat C(0,1)$
\item\label{ip:d}  
 \(\dis\inf_{\xi\in[a,b]}d(\xi)>0\) and   \(\dis\sup_{\xi\in[a,b]}d(\xi)<+\infty\) whenever \(0<a<b<1\).
\item\label{ip:acca} 
 $h(0)=h(1)=0$ and 
 \(\dis\inf_{\xi\in[a,b]}h(\xi)>0\)  whenever \(0<a<b<1\).
\item\label{ip:kappa} $\kappa:=d^{\frac{1}{p-1}}h\in L^1(0,1)$.
\end{enumerate}
Notice that \(d\) is allowed to be unbounded near 0 and 1, or to vanish at 0 and 1. When \(d(0^+)=0\) and/or \(d(1^-)=0\), the equation  \eqref{pdeprob} is said {\it degenerate} or {\it doubly degenerate}. Analogously, it is said {\it singular} or {\it doubly singular} if $d(u)\to+\infty$ when $u\to0^+$ and/or $u\to1^-$.


When searching  for  t.w.s.'s, that is, solutions $u(x,t):=v(x-ct)$, for some one-variable function \(v\) (the wave profile) and some $c\in\R$  (the wave speed), 
 the wave profile $v=v(z)$ solves the ordinary differential equation
\begin{equation}
\label{odeprob0}
(d(v)|v'|^{p-2}v')'+(cg(v)-f(v))v'+h(v)=0, \quad v\in[0,1].
\end{equation}
More specifically, we are interested in the heteroclinic solutions to \eqref{odeprob0} joining the equilibria $v=1$ and $v=0$, i.e., the solutions to
\begin{equation}
\label{odeprob}
\left\{
\begin{alignedat}{1}
&(d(v)|v'|^{p-2}v')'+(cg(v)-f(v))v'+h(v)=0, \quad v\in[0,1], \\
&v(-\infty)=1,\quad v(+\infty)=0.
\end{alignedat}
\right.
\end{equation}
Due to the presence of possible discontinuities, we cannot expect the solutions of \eqref{odeprob} to be regular everywhere, but we will search for solutions continuously differentiable at each point where \(0<v(z)<1\), with the exception, at most, of the discontinuity points of the diffusivity \(d\), satisfying equation \eqref{odeprob} at the points where all the coefficients are continuous (see Definition \ref{defsol}, Proposition \ref{prop:nec}, and Remark \ref{rem:differ} for a detailed discussion about this matter).

In order to state our main result, we introduce the following notations, by putting
\begin{equation}
\label{notation}
F_0:=\sup_{\xi\in(0,1)}\fint_0^\xi f(\tau)\dtau, \quad G_0:=\inf_{\xi\in(0,1)}\fint_0^\xi g(\tau)\dtau, \quad  K_0:=\sup_{\xi\in(0,1)}\fint_0^\xi \frac{\kappa(\tau)}{\tau^{\frac{1}{p-1}}}\dtau,
\eeq
\beq
\ell_p:=\liminf_{\xi\to0^+}\frac{\kappa(\xi)^{p-1}}{\xi}\ , \quad L_p:=\limsup_{\xi\to0^+}\frac{\kappa(\xi)^{p-1}}{\xi}.
\label{notation2}
\end{equation}
We will consider the set
\beq \label{def:Gamma-c}
\C:=\{c\in\R: \, \mbox{there exists a t.w.s. of speed } \, c\}.
\eeq

The following theorem is an immediate consequence of Theorems \ref{t:mainthm1} and \ref{t:mainthm2} in Section 5 (see also Remark \ref{finalrmk}).

\begin{thm}
\label{mainthm}
Suppose that $G_0>0$. Then, if 
$\ell_p=+\infty$, no t.w.s. exists. Instead, if \(L_p<+\infty\), then $\C=[c^*,+\infty)$ for some $c^*\in\R$ fulfilling
\begin{equation}
\label{threshold}
\frac{f(0)}{g(0)}+\frac{p'}{g(0)}(p-1)^{\frac{1}{p}}\ell_p^{\frac{1}{p}} \leq c^* \leq \frac{F_0}{G_0}+\frac{p'}{G_0}(p-1)^{\frac{1}{p}}K_0^{\frac{1}{p'}}.
\end{equation}
\end{thm}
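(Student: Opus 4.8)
The plan is to reduce this theorem to the two results it is declared to follow from, Theorems \ref{t:mainthm1} and \ref{t:mainthm2}, and then show how the assembled statement follows. Concretely, I expect Theorem \ref{t:mainthm1} to handle the non-existence part (under $G_0>0$ and $\ell_p=+\infty$, no traveling wave solution exists) together with the lower bound in \eqref{threshold}, and Theorem \ref{t:mainthm2} to establish that, when $L_p<+\infty$, the set of admissible speeds $\C$ is a closed half-line $[c^*,+\infty)$ and to provide the upper bound in \eqref{threshold}. So the first step is simply to quote these two theorems and observe that \eqref{threshold} and the structure of $\C$ are exactly their conclusions specialized to the hypothesis $G_0>0$; the only thing that needs genuine argument is that the pieces are compatible and that nothing is lost in the reduction.

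The analytical heart — which I presume is carried out in the proofs of those two theorems rather than here — proceeds along the classical route for degenerate/singular traveling waves. One introduces the \emph{desingularized} first-order reduction: for a strictly decreasing profile $v$, set $z(v):=d(v)|v'|^{p-2}v'$ (or its absolute value) as a function of $v\in(0,1)$, which turns \eqref{odeprob} into a singular first-order ODE of the form $z' = (f(v)-cg(v)) - h(v)\,(\text{something involving } z^{-1/(p-1)})$. A traveling wave of speed $c$ exists if and only if this scalar problem admits a solution $z$ on $(0,1)$ with $z>0$ inside and $z(0^+)=z(1^-)=0$; the degeneracy/singularity of $d$ near the endpoints, the $L^1$-condition \ref{ip:kappa} on $\kappa=d^{1/(p-1)}h$, and the quantities $\ell_p,L_p,K_0$ all enter precisely when analyzing the boundary behavior at $v=0$. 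Monotonicity in $c$ of the associated vector field gives the half-line structure: if speed $c$ works, so does every $c'>c$ (since increasing $c$ makes $cg(v)-f(v)$ larger where $g>0$, pushing the sub/supersolution comparison in the favorable direction — this is exactly where $G_0>0$, i.e. the sign condition on the averaged accumulation term, is used), and a compactness/closedness argument identifies the infimum $c^*$ as itself admissible.

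For the explicit bounds in \eqref{threshold}, the lower bound comes from a local obstruction at $v=0^+$: comparing $z(v)$ near $0$ with the model power behavior forces $c\,g(0)-f(0)$ to dominate a term of size $p'(p-1)^{1/p}\ell_p^{1/p}$, coming from balancing the diffusion scaling $z\sim v^{p/(p-1)}$ against the reaction contribution $\int_0^v h$, with $\ell_p$ controlling the reaction strength; rearranging gives $c^*\ge f(0)/g(0) + (p'/g(0))(p-1)^{1/p}\ell_p^{1/p}$. The upper bound is proved by exhibiting an explicit supersolution of the first-order problem on all of $(0,1)$, built from the antiderivatives $\fint_0^\xi f$, $\fint_0^\xi g$, and $\fint_0^\xi \kappa(\tau)\tau^{-1/(p-1)}\,\mathrm{d}\tau$ whose suprema/infima are $F_0,G_0,K_0$; feasibility of that supersolution for every $c$ above the stated value certifies $c^* \le F_0/G_0 + (p'/G_0)(p-1)^{1/p}K_0^{1/p'}$.

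The main obstacle, and presumably the reason the bulk of the work lives in Sections before this one, is the rigorous treatment of the boundary behavior at the degenerate/singular endpoints in the presence of discontinuous coefficients: one must show that the desingularized solution $z$ really does reach $0$ at both ends with the right rate, that the candidate sub/supersolutions remain valid across the finitely many jump points of $f,g,h,d$ (where the profile need not be $C^1$), and that the $\Gamma$-convergence machinery advertised in the keywords correctly passes to the limit $c\to c^*$ to produce an actual solution at the threshold. Once \ref{t:mainthm1} and \ref{t:mainthm2} are in hand, however, the present theorem is immediate: combine the non-existence half of \ref{t:mainthm1} with the half-line characterization and the two-sided estimate from \ref{t:mainthm2}, as recorded in Remark \ref{finalrmk}.
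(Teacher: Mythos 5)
Your proposal is correct and follows essentially the same route as the paper: Theorem \ref{mainthm} is obtained exactly by combining the non-existence case of Theorem \ref{t:mainthm1} with the minimum-speed and half-line conclusions of Theorem \ref{t:mainthm2}, passing from \eqref{eq:c*cond1}--\eqref{eq:c*cond2} to \eqref{threshold} via Remark \ref{finalrmk}, and your sketch of the underlying machinery (first-order reduction, comparison/lower-solution arguments, $\eps$-regularization with $\Gamma$-convergence, compactness at $c^*$) matches Sections 3--5. The only details worth spelling out in the reduction are that $G_0>0$ gives $g(0)\geq G_0>0$, that it implies \eqref{implement1} (so the half-line statement of Theorem \ref{t:mainthm2} applies, via the integral condition $\int_0^\xi g\,{\rm d}\tau\geq 0$ rather than pointwise positivity of $g$), and that $L_p<+\infty$ together with $G_0>0$ ensures \eqref{excond} holds for all large $c$, so $\C\neq\emptyset$.
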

Incidentally, we notice that \eqref{threshold} is well defined, since $\displaystyle{g(0)=\lim_{\xi\to0^+}\fint_0^\xi g(\tau)\dtau\geq G_0>0}$. Actually (see Theorems \ref{t:mainthm1} and \ref{t:mainthm2}, as well as Remark \ref{finalrmk}), the assumption $G_0>0$ can be relaxed by supposing $g(0)>0$ and
\begin{equation}
\label{implement1}
\int_0^\xi g(\tau)\dtau\geq0 \quad \mbox{for all} \;\; \xi\in[0,1],
\end{equation}
provided that \eqref{threshold} is replaced by the more general conditions
\begin{equation*}
\begin{aligned}
\inf_{\xi\in(0,1)} \fint_0^\xi (c^*g(\tau)-f(\tau)) \dtau &\leq p'(p-1)^{\frac{1}{p}}\left(\sup_{\xi\in(0,1)} \fint_0^\xi \frac{\kappa(\tau)}{\tau^{\frac{1}{p-1}}}\dtau\right)^{\frac{1}{p'}}, \\
c^* g(0)-f(0) &\geq p'(p-1)^{\frac{1}{p}}\ell_p^{\frac{1}{p}}.
\end{aligned}
\end{equation*}
Finally, if one removes also condition \eqref{implement1}, then it is possible to show that \(\mathcal C\) admits minimum \(c^*\) but in this case we cannot state  \(\mathcal C\) is a half-line (see Theorem \ref{t:mainthm2}).



Our results generalize the ones in \cite{DJKZ} in various directions:
\begin{itemize}
\item the generality of the reaction-diffusion equation, which involves also an accumulation term \(g\), possibily discontinuous and not necessarily positive;
\item we allow all the terms to be discontinuous (in particular the reaction term \(h\)), without restrictions on the type of discontinuities;
\item the estimate \eqref{threshold} for the threshold value \(c^*\) is finer that the one in \cite[Theorem 2.1]{DJKZ}, according to Remark \ref{finalrmk};
\item the assumptions required in the existence and non-existence results are more general.
\end{itemize}

\noindent
More in detail, as for the last point, we underline that by requiring assumption \eqref{ip:kappa}, we allow not only the diffusion \(d\), but also the product \(\kappa\) to be unbounded near the equilibrium 1. According to our knowledge, this situation is new even in the classical case.

\medskip
In the present paper we do not investigate the classification of the t.w.s.'s, that is, if they attains the equilibria at finite times (with a non-zero slope), distinguishing between {\it classical} and {\it sharp} ones (see, e.g., \cite{CMP}), since possible discontinuities in the interior of the interval \([0,1]\) do not interfer with the behavior of the t.w.s. at the equilibria.
So, the classification of the t.w.s.'s in the case of discontinuous coefficients does not differ from that of the continuous ones.  Instead, the case when  \(\kappa\), or simply \(d\), is unbounded near 1 is completely new (also for the classical case) and might deserve to be investigated.


\bigskip

We proceed in this way. After giving, in Section 2, the definition of solution to \eqref{odeprob}, coherently with \cite{CMP} and \cite{DJKZ}, we analyze its main properties and we show how \eqref{odeprob} reduces to a first-order singular boundary value problem (BVP) in Section 3. En passant, we prove a local a priori estimate from below, which is crucial to handle the singularity. Then, in Section 4, we regularize the singular BVP by introducing the concept of $\eps$-regularization, reminiscent of the classical linear interpolation methods. Then, we use the inf-stability of the $\Gamma$-convergence, discussed in the same section, to ensure that the existence conditions in \cite{M}, involving the quantities appearing in \eqref{notation}, are preserved through $\eps$-regularization at the limit $\eps\to0^+$. Finally, in Section 5, we give sufficient conditions ensuring either existence or non-existence of t.w.s.'s; then, assuming also \eqref{implement1}, we present a structure theorem for $\C$.

\section{Solutions and their properties}

\bigskip

This section is devoted to giving the definition of solution to problem \eqref{odeprob} and studying the main properties of the solutions.

First of all, let us introduce the following notation. Set

\begin{equation*}
\Theta:= \D(d)\cup\D(f)\cup\D(g)\cup\D(h) \ \ \text{and} \ \ \Theta^*:=\Theta\cup\{0,1\}. 
\end{equation*}
Notice that if \(v:\R\to [0,1]\) is continuous then both \(v^{-1}(\Theta)\) and \(v^{-1}(\Theta^*)\) are closed, since \(\Theta\) and \(\Theta^*\) are closed.

\begin{defi}\label{defsol}
A continuous function \(v:\R\to [0,1]\) is  a solution of  \eqref{odeprob} if \( v\in C^1(\R\setminus v^{-1}(\Theta^*))\), satisfies the boundary conditions at \(\pm \infty\), and there exists a continuous function \(\Phi_v:\R\to \R\) such that:
\begin{enumerate}[label={{\rm (\roman*)}}]
\item\label{defa}  \(\Phi_v(z)=d(v(z))|v'(z)|^{p-2}v'(z)\) in each open interval where \(v\) is $C^1$;
\item\label{defnew} \(\dis\lim_{z\to \pm \infty} \Phi_v(z)=0\);
\item\label{defb} \(\Phi_v(z)=0\) whenever \(v(z)=0\) or \(v(z)=1\);
\item \label{defc} the following integral formulation of \eqref{odeprob} holds for any \(z_1,z_2\in \R\):
\beq 
\Phi_v(z_2)-\Phi_v(z_1)+\int_{v(z_1)}^{v(z_2)} (cg(\xi)-f(\xi)) \dxi + \int_{z_1}^{z_2} h(v(z)) \dz=0.\label{eq:integr}
\eeq

\end{enumerate}

\end{defi}

\medskip
\noindent
The following result summarizes the main properties of the solutions to problem \eqref{odeprob}.

\medskip

\begin{prop} \label{prop:nec}
Let \(v\) be a solution of problem \eqref{odeprob}. Then, put 

\beq
I_v:=\{z\in \R : \ 0<v(z) <1\}, \label{def:Iv}
\eeq 
we have that \(I_v\) is an open (bounded or unbounded) interval and \(v\) is strictly decreasing in \(I_v\), with \(v'(z)<0\) for every \(z\in I_v\setminus v^{-1}(\Theta)\).

Furthermore, $\Phi_v(z)<0$ for every $z\in I_v$, \(\Phi_v\) is $C^1$ in \(I_v\setminus v^{-1}(\Theta)\), and satisfies \beq
\Phi_v'(z)+(cg(v(z))-f(v(z)))v'(z)+h(v(z))=0 \quad \text{for every } z\in I_v\setminus v^{-1}(\Theta). \label{eq:differ}
\eeq


\end{prop}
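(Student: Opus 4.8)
The plan is to extract structural information from the integral formulation \eqref{eq:integr} step by step. First I would show that $I_v$ is an interval. Since $v$ is continuous with $v(-\infty)=1$ and $v(+\infty)=0$, the set $I_v=v^{-1}((0,1))$ is open; to see it is an interval (connected), suppose not, so there exist $z_1<z_2$ with $v(z_1)=v(z_2)\in\{0,1\}$ and $v$ takes a value in $(0,1)$ between them. Applying \eqref{eq:integr} on $[z_1,z_2]$, the boundary term $\int_{v(z_1)}^{v(z_2)}(cg-f)\,\mathrm d\xi$ vanishes, $\Phi_v(z_1)=\Phi_v(z_2)=0$ by \ref{defb}, and so $\int_{z_1}^{z_2}h(v(z))\,\mathrm dz=0$; but on a subinterval where $v$ stays in a compact subset of $(0,1)$ the integrand is bounded below by a positive constant (by \eqref{ip:acca}), a contradiction. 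A small refinement of the same argument handles the case $v(z_1)=0,\ v(z_2)=1$ or vice versa: pick points where $v$ re-enters $\{0\}$ or $\{1\}$, reduce to the previous case, or directly note the contradiction with positivity of $h$ on the interior portion. Hence $I_v$ is a (possibly unbounded) open interval.

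Next I would establish monotonicity and the sign of $\Phi_v$. Fix $z_1<z_2$ in $I_v$; by \eqref{eq:integr},
\[
\Phi_v(z_2)-\Phi_v(z_1)=-\int_{v(z_1)}^{v(z_2)}(cg(\xi)-f(\xi))\,\mathrm d\xi-\int_{z_1}^{z_2}h(v(z))\,\mathrm dz.
\]
Since $v$ is continuous and $C^1$ off the closed null-ish set $v^{-1}(\Theta^*)$, and $\Phi_v$ is continuous everywhere with $\Phi_v=d(v)|v'|^{p-2}v'$ on the $C^1$ part, a first observation is that on any interval where $v$ is locally constant equal to some $\sigma\in(0,1)$, \eqref{eq:differ} (obtained by differentiating \eqref{eq:integr} where the coefficients are continuous) forces $\Phi_v'=-h(\sigma)<0$, so $v$ cannot be constant on a nondegenerate subinterval of $I_v\setminus v^{-1}(\Theta)$; thus $v$ is strictly monotone on each maximal continuity interval, and by continuity strictly monotone on all of $I_v$. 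It must be \emph{decreasing} because $v(-\infty)=1>0=v(+\infty)$. Consequently $v'(z)<0$ at every $z\in I_v$ where $v$ is $C^1$, i.e.\ on $I_v\setminus v^{-1}(\Theta)$ (note $v^{-1}(\Theta^*)\cap I_v=v^{-1}(\Theta)\cap I_v$ since $0,1\notin v(I_v)$), and hence $\Phi_v=d(v)|v'|^{p-2}v'<0$ there by \eqref{ip:d}. Since $\Phi_v$ is continuous and nonzero on the dense subset $I_v\setminus v^{-1}(\Theta)$ of the interval $I_v$, and vanishes only where $v\in\{0,1\}$ (by \ref{defb}, outside $I_v$), I would argue $\Phi_v<0$ on all of $I_v$: on each of the finitely many ``bad'' points $z_0\in v^{-1}(\Theta)\cap I_v$, continuity of $\Phi_v$ gives $\Phi_v(z_0)\le0$; equality $\Phi_v(z_0)=0$ is excluded because $\Phi_v$ is strictly monotone near $z_0$ (it satisfies $\Phi_v'=-(cg(v)-f(v))v'-h(v)$ on each side, and one checks the one-sided limits of $\Phi_v$ cannot both be zero given $v'<0$ nearby and $h>0$) — more simply, between two consecutive points of $v^{-1}(\Theta)$ the function $\Phi_v$ is $C^1$, negative, and by \eqref{eq:integr} its jump across a discontinuity of the coefficients is only through the integral terms, which are continuous in the endpoints, so $\Phi_v$ has no jumps at all and stays strictly negative on the closure of each piece inside $I_v$.

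Finally, the regularity of $\Phi_v$ on $I_v\setminus v^{-1}(\Theta)$ and equation \eqref{eq:differ} follow by differentiating \eqref{eq:integr}: on an open subinterval of $I_v$ avoiding $v^{-1}(\Theta)$ all coefficients $d,f,g,h$ are continuous along the curve $z\mapsto v(z)$, the maps $z\mapsto\int_{v(z_1)}^{v(z)}(cg(\xi)-f(\xi))\,\mathrm d\xi$ and $z\mapsto\int_{z_1}^z h(v)$ are $C^1$ (the first by the chain rule, using $v\in C^1$ there), hence $\Phi_v$ is $C^1$ there and \eqref{eq:differ} holds pointwise. The main obstacle, and the step deserving the most care, is the connectedness of $I_v$ together with the strict negativity of $\Phi_v$ at the isolated discontinuity points in $v^{-1}(\Theta)\cap I_v$: one must rule out that the profile ``touches'' an equilibrium and comes back, or that $\Phi_v$ vanishes at a coefficient-jump, and both require combining the integral identity \eqref{eq:integr} with the strict positivity of $h$ from \eqref{ip:acca} and the non-degeneracy of $d$ from \eqref{ip:d} on compact subsets of $(0,1)$.
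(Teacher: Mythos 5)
Your overall skeleton (exploit \eqref{eq:integr}, positivity of $h$, non-degeneracy of $d$, then differentiate to get \eqref{eq:differ}) matches the paper, but the central step is missing. From ``$v$ is not constant on any subinterval'' you conclude that $v$ is strictly monotone on each continuity interval and hence on $I_v$; this is a non sequitur, since a non-constant function need not be monotone and the boundary values $v(-\infty)=1$, $v(+\infty)=0$ only fix the overall trend, not exclude oscillations. The local observation you could draw from \eqref{eq:differ} (at a critical point $z_0$ one has $\Phi_v(z_0)=0$ and $\Phi_v'(z_0)=-h(v(z_0))<0$) rules out interior local minima only where $v$ is $C^1$; it says nothing about a dip whose minimum sits at a point of $v^{-1}(\Theta)$, where $v$ need not be differentiable. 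The paper handles precisely this with a separate argument (its Claim 2): if $v(z_1)=v(z_2)$ and $v$ dipped below the common value, pick a level $\bar\xi\notin\Theta$ between the minimum and $v(z_1)$, take the crossing points $\zeta_1,\zeta_2$ of that level (where $v$ is $C^1$ and $v'(\zeta_1)\le 0\le v'(\zeta_2)$), and get from \eqref{eq:integr} that $d(\bar\xi)\bigl(|v'(\zeta_2)|^{p-2}v'(\zeta_2)-|v'(\zeta_1)|^{p-2}v'(\zeta_1)\bigr)=-\int_{\zeta_1}^{\zeta_2}h(v(z))\,{\rm d}z<0$, a contradiction. In the paper, connectedness of $I_v$ is then a consequence of this global monotonicity; your direct connectedness argument also has a hole, because disconnectedness only provides one point with $v\in\{0,1\}$ between points of $I_v$ — there need not exist two points with the \emph{same} value in $\{0,1\}$ (the profile could touch an equilibrium once and approach it again only asymptotically), so your reduction would need the boundary conditions and \ref{defnew} through a limiting form of \eqref{eq:integr}, which you do not invoke.

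Two further steps are asserted rather than proved. First, $v'(z)<0$ on $I_v\setminus v^{-1}(\Theta)$ does not follow from strict decrease (that only gives $v'\le 0$); one needs the paper's Claim 5 argument: $v'(z_0)=0$ would give $\Phi_v(z_0)=0$ and $\Phi_v'(z_0)=-h(v(z_0))<0$, hence $\Phi_v>0$, i.e.\ $v'>0$, just to the left of $z_0$, contradicting monotonicity. Second, your exclusion of $\Phi_v(z_0)=0$ at $z_0\in v^{-1}(\Theta)\cap I_v$ does not work as written: a continuous function negative on the two adjacent open pieces can perfectly well vanish at the junction (think of $-|z-z_0|$), so ``no jumps, hence strictly negative on the closure'' proves nothing, and ``$\Phi_v$ is strictly monotone near $z_0$'' is unjustified because $cg-f$ has no sign. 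The paper's Claim 6 is genuinely quantitative: $\Phi_v(z_0)=0$ combined with $\liminf_{\xi\to v(z_0)}d(\xi)>0$ forces $v'(z_0)=0$; then the left difference quotient of $\Phi_v$, computed from \eqref{eq:integr}, is bounded above by $-\inf h<0$ near $v(z_0)$ (the averaged $cg-f$ term is killed by $\frac{v(z_0)-v(z_0-\eps)}{\eps}\to 0$), which forces $\Phi_v>0$ on a left neighborhood of $z_0$, a contradiction. Without the dip argument and this last estimate, the proposition is not proved.
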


\medskip
\begin{proof}
We split the proof into various steps.

\medskip

\smallskip
\noindent

\medskip
{\bf Claim 1.} The solution \(v\) cannot be constant in any interval \((a,b)\subseteq I_v\).

\smallskip
Indeed, if \(v\) is constant in an interval \((a,b)\subseteq I_v\), then by the definition of solution we have \(\Phi_v(z)=d(v(z))|v'(z)|^{p-2}v'(z)=0\) for each \(z\in (a,b)\), and by \eqref{eq:integr} we derive \(h(v(z))=0\) for every \(z\in (a,b)\), contradicting the positivity of the function \(h\) in \((0,1)\).

\medskip
{\bf Claim 2.} If \(v(z_1)=v(z_2)\) for some \(z_1,z_2 \in I_v\), \(z_1<z_2\), then \(v(z)\ge v(z_1)\) for every \(z\in (z_1,z_2)\cap I_v\).

\smallskip
Let \(z_1,z_2\in I_v\) be such that \(v(z_1)=v(z_2)\). Assume by contradiction that \(v(z^*)<v(z_1)\) for some \(z^*\in (z_1,z_2)\cap I_v\). Since \(\Theta\) is a finite set, there exists a value \(\bar\xi\in (v(z^*),v(z_1))\setminus \Theta\). Put 
\[ \zeta_1:= \sup\{z>z_1 \ : v(\zeta)\ge\bar\xi \ \text{ for every } \zeta\in (z_1,z)\}\]
\[\zeta_2:= \inf\{z<z_2 \ : v(\zeta)\ge \bar\xi \ \text{ for every } \zeta\in (z,z_2)\}.\]
Of course, \(v(\zeta_1)=v(\zeta_2)=\bar\xi\) and  \(\zeta_1<\zeta_2\). Moreover,  since \(\bar\xi \not \in \Theta\), \(v\) is $C^1$ in a neighborhood of \(\zeta_1, \zeta_2\), with \(v'(\zeta_1)\le 0\) and \(v'(\zeta_2)\ge 0\), by the definition of \(\zeta_1,\zeta_2\).
On the other hand, by \eqref{eq:integr}, we have
\begin{align*}
d(\bar\xi) (|v'(\zeta_2)|^{p-2}v'(\zeta_2)-|v'(\zeta_1)|^{p-2}v'(\zeta_1))&= d(v(\zeta_2))|v'(\zeta_2)|^{p-2}v'(\zeta_2) - d(v(\zeta_1))|v'(\zeta_1)|^{p-2}v'(\zeta_1) \\
&= \Phi_v(\zeta_2)-\Phi_v(\zeta_1)=-\int_{\zeta_1}^{\zeta_2} h(v(z)) \dz <0
\end{align*}
implying \(v'(\zeta_2)<v'(\zeta_1)\), a contradiction.

\medskip



\medskip
{\bf Claim 3.} The solution \(v\) is strictly decreasing in \(I_v\) and \(I_v\) is an open interval.
\smallskip

Assume, by contradiction, that \(v(z_1)< v(z_2)\) for some 
\(z_1,z_2\in I_v\), with \(z_1<z_2\). Then, since \(v(-\infty)=1\), we infer that there exists a point \(\zeta_1 <z_1\) such that \(v(\zeta_1)=v(z_2)\), in contrast to Claim 2. So, \(v\) is decreasing in \(I_v\). Moreover, by Claim 1 we deduce that \(v\) is strictly decreasing in \(I_v\).
Then, \(I_v\) consists of a unique (open) interval.

\medskip
{\bf Claim 4.} \(\Phi_v\) is \(C^1\) in \(I_v\setminus v^{-1}(\Theta)\) and equation \eqref{eq:differ}
 holds true.
\smallskip

Fix \(z\in I_v\setminus v^{-1}(\Theta)\) and let \(\delta>0\) be such that \(z+\varepsilon\in I_v\setminus v^{-1}(\Theta) \) if \(|\varepsilon|<\delta\) (this is possible since \(I_v\setminus v^{-1}(\Theta)\) is an open set).
So, consider equation \(\eqref{eq:integr}\) with \(z_1=z\) and \(z_2=z+\varepsilon\). Dividing by \(\varepsilon\ne 0\) and passing to the limit as \(\varepsilon\to 0\) we get that \(\Phi_v\) is differentiable at \(z\) and
\[ \Phi_v'(z) + [c(g(v(z))- f(v(z))]v'(z) + h(v(z))=0.\]
Hence, \(\Phi_v\) is \(C^1\) in \(I_v\setminus v^{-1}(\Theta)\).

\medskip
{\bf Claim 5.} For every \(z\in I_v \setminus v^{-1}(\Theta)\) we have \(v'(z)<0\) and \(\Phi_v(z)<0\).
\smallskip

Indeed, assume by contradiction \(v'(z_0)=0\) for some \(z_0\in I_v\setminus v^{-1}(\Theta)\). Then, since $v\in C^1$ in a neighborhood of $z_0$, by equation 
\eqref{eq:differ} we get \(\Phi_v'(z_0)=(d(v(z_0))|v'(z_0)|^{p-2}v'(z_0))'=-h(v(z_0))<0\), with \(\Phi_v(z_0)=0\), so we have \(\Phi_v(z)=d(v(z))|v'(z)|^{p-2}v'(z)>0\) in a left neighborhood of \(z_0\), in contradiction to Claim 3. So, \(v'(z)<0\) and by Definition \ref{defsol}\ref{defa} we also have \(\Phi_v(z)<0\).



\medskip
{\bf Claim 6.} $\Phi_v(z)<0$ for every \(z\in I_v\).

\smallskip
 Since \(\Theta\) is finite, by Claim 3 also $v^{-1}(\Theta)$ is finite. So, Claim 5 ensures \(\Phi_v(z)\le 0\) for every \(z\in \R\). Assume by contradiction \(\Phi_v(z_0)=0\) for some \(z_0\in v^{-1}(\Theta)\). Since \(v\) is strictly decreasing in \(I_v\) and \(\Theta\) is a finite set, we have that \(v\) is $C^1$ in a punctured neighborhood of \(z_0\). Hence,
\[ 0=\Phi_v(z_0)=\lim_{z\to z_0} \Phi_v(z)=-\lim_{z\to z_0^\pm} d(v(z))|v'(z)|^{p-1}. \]
Therefore, since \(\displaystyle{\liminf_{\xi\to v(z_0)}d(\xi)>0}\), we deduce \(v'(z_0^+)=v'(z_0^-)=0\), implying that \(v\) is differentiable at \(z_0\) with \(v'(z_0)=0\). Hence, chosen a number \(\delta>0\) such that \([z_0-\delta, z_0+\delta]\subseteq (0,1)\) and put \(\mu:=\inf_{[z_0-\delta,z_0+\delta]} h(z)>0\), by \eqref{eq:integr} we have
\begin{align*}
\limsup_{\varepsilon \to 0^+} \frac{-\Phi_v(z_0-\varepsilon)}{\varepsilon} &=\limsup_{\varepsilon \to 0^+} \frac{1}{\varepsilon} \left(\Phi_v(z_0)- \Phi_v(z_0-\varepsilon) \right)  \\ 
&= \limsup_{\varepsilon \to 0^+}\frac{1}{\varepsilon} \left( \int_{v(z_0-\varepsilon)}^{v(z_0)} (f(\xi)-c g(\xi))\dxi  -\int_{z_0-\varepsilon}^{z_0} h(v(\zeta)) \dzeta \right) \\ &=\limsup_{\varepsilon \to 0^+}\left(\frac{v(z_0)-v(z_0-\varepsilon)}{\varepsilon}  \fint_{v(z_0-\varepsilon)}^{v(z_0)} (f(\xi)-c g(\xi)) \dxi  -\fint_{z_0-\varepsilon}^{z_0} h(v(\zeta)) \dzeta \right)\\ &\le -\mu<0.
\end{align*}

\noindent
Therefore, \(\Phi(z)>0\) in a left neighborhood of \(z_0\), a contradiction. 

\end{proof}

\begin{rmk} Taking account of the properties proved in the previous proposition, it is  easy to check that  Definition  \ref{defsol} reduces to the classical one when \(\Theta=\emptyset\). Indeed, in this case \(\Theta^*=\{0,1\}\) and then \(v\in C^1(I_v)\). So \(\Phi_v(z) = d(v(z))|v'(z)|^{p-2} v'(z)\) in \(I_v\), with \(\Phi_v\in C^1(I_v)\) and
\[ (d(v(z))|v'(z)|^{p-2} v'(z))'+ (cg(v(z))-f(v(z)))v'(z) + h(v(z)) =0 \]
for every \(z\in I_v\). Finally,
\[ \lim_{z\to \pm \infty} d(v(z))|v'(z)|^{p-2} v'(z)=0. \]
\end{rmk}

\begin{rmk}\label{rem:differ}
 In light of Proposition \ref{prop:nec}, we see that the set \(v^{-1}(\Theta)\) is finite, so there are, at most, a finite number of points where \(v\) is not differentiable. 
However, note that actually \(v\) is $C^1$ in \(I_v\setminus \D(d)\). Indeed, if \(z_0\in I_v\setminus \D(d)\), since \(v\) is $C^1$ in a punctured neighborhood of \(z_0\), by Definition \ref{defsol}\ref{defa} there exists, finite, the limit
\[ \dis  \lim_{z\to z_0} v'(z) = -\lim_{z\to z_0} \left|\frac{\Phi_v(z)}{d(v(z))}\right|^{\frac{1}{p-1}}=-\left|\frac{\Phi_v(z_0)}{d(v(z_0))}\right|^{\frac{1}{p-1}}<0.\]

Finally, since \(I_v\) is an interval, we find that 
 if \(v(z_*)=1\) for some \(z_*\in \R\), then \(v(z)=1\) for all \(z\le z_*\); similarly,
 if \(v(z^*)=0\) for some \(z^*\in \R\), then \(v(z)=0\) for all \(z\ge z^*\).

Therefore, put 
\beq \Gamma_v := v^{-1}(\D(d)) \ , \quad  \Gamma_v^\circ = v^{-1}(0)\cup v^{-1}(1), \quad   \Lambda_v:= \Gamma_v\cup \partial\Gamma_v^\circ,
\label{def:Gamma} \eeq
 $\Lambda_v$ is a finite set
and \(v\in C^1(\R\setminus \Lambda_v)\). 

\noindent
We underline that if \(v\) attains one or both the values 0 and 1 at finite \(z\), that is if \(\Gamma_v^\circ\ne \emptyset\), then in general \(v\) is not differentiable at such points and the t.w.s. is said to be {\em sharp}. This phenomenon does not depend on the presence of discontinuities of the coefficients of the equation, but it occurs when \(d\) vanishes at 0 or at 1 (degenerate or doubly degenerate equations). As we clarified in the Introduction, we do not investigate this topic in the present paper.
\end{rmk} 

\begin{rmk} \label{rem:AC}
If \(v\) is a solution of problem \eqref{odeprob}, due to the monotonicity proved in Proposition \ref{prop:nec}, we have that \(v'\in L^1_{\text{loc}}(\R)\) so, since \(v\in C^0(\R)\cap C^1(\R\setminus \Lambda_v)\), we deduce \(v\in W^{1,1}_{\text{loc}}(\R)\). 
\newline
Similarly, equation \eqref{eq:differ} 
implies that \(\Phi_v'\in L^1_{\text{loc}}(\R)\) and \(\Phi_v\) is \(C^1\) in \(I_v\setminus v^{-1}(\Theta)\). Hence, also \(\Phi_v\in W^{1,1}_{\text{loc}}(\R)\).
\end{rmk}

\begin{prop}
    If \(d\) is bounded in \((0,1)\), then condition \ref{defnew} in Definition \ref{defsol} is a consequence of the other conditions \ref{defa}, \ref{defb} and \ref{defc}.
\end{prop}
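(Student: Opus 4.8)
The plan is to reduce everything to the conclusions of Proposition~\ref{prop:nec} and Remark~\ref{rem:AC}, which — as a direct inspection of their proofs shows — never invoke condition~\ref{defnew}: they rest only on \ref{defa}, \ref{defc}, the boundary conditions at $\pm\infty$, and the standing hypotheses. Hence, for a continuous $v:\R\to[0,1]$ with $v\in C^1(\R\setminus v^{-1}(\Theta^*))$, satisfying the boundary conditions, together with a continuous $\Phi_v$ fulfilling \ref{defa}, \ref{defb}, \ref{defc}, I may take for granted that $I_v$ is an open interval on which $v$ is strictly decreasing (with $v'<0$ off the finite set $v^{-1}(\Theta)$), that $\Phi_v<0$ on $I_v$ while $\Phi_v\equiv0$ on $\R\setminus I_v$ by \ref{defb}, and that $v$ is absolutely continuous on compact intervals. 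I will prove $\lim_{z\to+\infty}\Phi_v(z)=0$; the case $z\to-\infty$ is symmetric. If $I_v$ is bounded above by some $z^\ast$, then $v(z)\in\{0,1\}$ for $z>z^\ast$, so \ref{defb} gives $\Phi_v\equiv0$ on $(z^\ast,+\infty)$ and there is nothing to prove; so I assume $I_v=(\alpha,+\infty)$ with $\alpha\in[-\infty,+\infty)$ and fix $z_0>\alpha$, so that $0<v(z)<1$ for $z>z_0$ and $v(z)\to0^+$ as $z\to+\infty$.

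Next, I would introduce $\psi(s):=\int_0^s(cg(\tau)-f(\tau))\dtau$, which is continuous and bounded on $[0,1]$ with $\psi(0)=0$ since $cg-f\in L^1(0,1)$ by \ref{ip:hatC}. Rewriting \eqref{eq:integr} in terms of $\psi$ gives, for all $z_1<z_2$,
\[
\Psi(z_2)-\Psi(z_1)=-\int_{z_1}^{z_2}h(v(z))\dz\le0,\qquad \Psi(z):=\Phi_v(z)+\psi(v(z)),
\]
because $h\ge0$ on $[0,1]$ by \ref{ip:acca}. Thus $\Psi$ is non-increasing on $\R$, hence it admits a limit $L\in[-\infty,+\infty)$ as $z\to+\infty$; since $\psi(v(z))\to\psi(0)=0$, this yields $\lim_{z\to+\infty}\Phi_v(z)=L$, and $L\le0$ because $\Phi_v<0$ on $I_v$.

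The remaining step — excluding $L<0$ — is the only place where the boundedness of $d$ is used, and it is the crux of the argument. Suppose $L<0$ (possibly $-\infty$) and pick $\delta>0$ and $z_1>z_0$ with $-\Phi_v(z)\ge\delta$ for all $z\ge z_1$. On $I_v\setminus v^{-1}(\Theta)$ we have, by \ref{defa} and $v'<0$, that $\Phi_v=-d(v)(-v')^{p-1}$, whence
\[
\delta\le-\Phi_v(z)=d(v(z))\,(-v'(z))^{p-1}\le M\,(-v'(z))^{p-1},\qquad M:=\sup_{(0,1)}d<+\infty,
\]
so $-v'(z)\ge(\delta/M)^{1/(p-1)}=:c_0>0$ for all $z>z_1$ outside a finite set. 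Integrating (using the absolute continuity of $v$, cf.\ Remark~\ref{rem:AC}) gives $v(z)\le v(z_1)-c_0(z-z_1)\to-\infty$, contradicting $v\ge0$; hence $L=0$. The argument at $-\infty$ is entirely analogous: if $I_v$ is bounded below, \ref{defb} again forces $\Phi_v\equiv0$ near $-\infty$; otherwise $v(z)\to1^-$, so $\psi(v(z))\to\psi(1)$ is finite and the non-increasing $\Psi$ is bounded above near $-\infty$ by $\sup_{[0,1]}|\psi|$ (as $\Phi_v<0$ there), so $\Phi_v(z)\to L'\le0$; and $L'<0$ would force, by the same flux-to-slope estimate, $v(z)\ge v(z_1)+c_0(z_1-z)\to+\infty$, contradicting $v\le1$. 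The main obstacle is precisely this last step: it is the upper bound $M$ on $d$ that converts a non-vanishing flux $\Phi_v$ into a uniform lower bound on $|v'|$, incompatible with $v$ being bounded — an implication that fails in the singular case $d(0^+)=+\infty$, which is exactly why \ref{defnew} must be imposed separately there.
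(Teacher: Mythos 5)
Your proof is correct and follows essentially the same route as the paper: both use the integral identity \ref{defc} to show that $\Phi_v$ admits a limit $L\le 0$ at $\pm\infty$ when $I_v$ is unbounded (your monotone auxiliary function $\Psi=\Phi_v+\psi\circ v$ is just a repackaging of the paper's direct passage to the limit in \eqref{eq:integr}), and both exclude $L<0$ by using $M:=\sup_{(0,1)}d<+\infty$ to convert the flux bound into a uniform negative bound on $v'$, contradicting the boundedness of $v$. The only differences are presentational: you make explicit the final integration step and the handling of the finite exceptional set $v^{-1}(\Theta)$, which the paper leaves implicit.
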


\begin{proof}
First, notice that condition \ref{defnew} has not been used in the proof of Proposition \ref{prop:nec}. 

Put \(a_v:=\inf I_v\) and \(b_v:=\sup I_v\) (see \eqref{def:Iv}). If either \(a_v\) or \(b_v\) is finite, condition \ref{defnew} is a consequence of \ref{defb} and the monotonicity of \(v\). Instead, if \(b_v=+\infty\) then by \eqref{eq:integr} we have, for any \(z_1, z\) with \(z_1<z\),
\[ \Phi_v(z)- \Phi_v(z_1)= 
\int_{v(z)}^{v(z_1)} (cg(\xi)-f(\xi)) \dxi
-\int_{z_1}^z h(v(\zeta)) \dzeta. \]
Therefore, there exists (finite or not) the limit
\[ L:= \lim_{z\to +\infty} \Phi_v(z)= \Phi_v(z_1)+\int_0^{v(z_1)} (cg(\xi)-f(\xi))\dxi-\int_{z_1}^{+\infty} h(v(\zeta)) \dzeta. \]
Since \(\Phi_v(z)<0\) for every \(z\in I_v\), we infer \(L\le 0\). If \(L< 0\) then, put  
\(M:=\dis \sup_{\xi \in (0,1)} d(\xi)\),
 we infer
\[ \limsup_{z\to +\infty} v'(z)= -\liminf_{z\to +\infty} \left|\frac{\Phi_v(z)}{d(v(z))}\right|^{\frac{1}{p-1}}\leq -\left(\frac{|L|}{M}\right)^{\frac{1}{p-1}}<0,\]
in contrast to the boundedness of \(v\). Thus, \(L=0\). The same argument works when \(a_v=-\infty\).
\end{proof}

\section{Reduction to a first-order problem}

As usual in the study of t.w.s.'s, 
problem \eqref{odeprob} can be reduced to a boundary value problem for a singular first-order ODE, as the next result shows.


\begin{prop}\label{prop:equiv}
Let the function \(\kappa\) be defined in \eqref{ip:kappa}. Then problem \eqref{odeprob} is equivalent to
\begin{equation}
\label{odeprob2}
\left\{
\begin{alignedat}{2}
\dot y &= cg(\xi)-f(\xi)-\frac{\kappa(\xi)}{y^{\frac{1}{p-1}}} \quad &&\mbox{in} \;\; (0,1)\setminus \Theta, \\
y&>0 \quad &&\mbox{in} \;\; (0,1), \\
y(0^+)&=y(1^-)=0,
\end{alignedat}
\right.
\end{equation}
in the following sense:

\begin{itemize}
\item if $v\in C(\R)\cap C^1(\R\setminus\Lambda_v)$ (see \eqref{def:Gamma}) is a solution to \eqref{odeprob}, then $y(\xi):=-\Phi_v(v^{-1}(\xi))$ is in $C(0,1)\cap C^1((0,1)\setminus\Theta)$ and solves \eqref{odeprob2};

\item if $y\in C(0,1)\cap C^1((0,1)\setminus\Theta)$ is a solution to \eqref{odeprob2} then the function $w:(0,1)\to\R$ defined as
$$ w(\xi):=-\int_{\frac{1}{2}}^\xi \left(\frac{d(\tau)}{y(\tau)}\right)^{\frac{1}{p-1}} \dtau \quad \mbox{for all} \;\; \xi\in(0,1) $$
is invertible in a suitable interval $(a,b)$ (possibly unbounded), and its inverse $w^{-1}:(a,b)\to(0,1)$ can be extended to a solution of \eqref{odeprob}.
\end{itemize}
\end{prop}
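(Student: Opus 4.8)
The plan is to establish the two implications of the equivalence separately, in each direction exhibiting the stated change of variables and checking the regularity together with the structural identities.

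\textbf{From \eqref{odeprob} to \eqref{odeprob2}.} Let $v$ be a solution of \eqref{odeprob}. By Proposition \ref{prop:nec} and Remark \ref{rem:differ}, $v$ restricts to a continuous strictly decreasing bijection $I_v\to(0,1)$ which is $C^1$ on $I_v\setminus v^{-1}(\Theta)$ with $v'<0$ there, so by the inverse function theorem $v^{-1}:(0,1)\to I_v$ is continuous, strictly decreasing, and $C^1$ on $(0,1)\setminus\Theta$. Since $\Phi_v$ is continuous on $\R$ and $C^1$ on $I_v\setminus v^{-1}(\Theta)$ (again Proposition \ref{prop:nec}), with $\Phi_v<0$ on $I_v$, the function $y(\xi):=-\Phi_v(v^{-1}(\xi))$ lies in $C(0,1)\cap C^1((0,1)\setminus\Theta)$ and is positive. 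For $\xi\in(0,1)\setminus\Theta$, set $z:=v^{-1}(\xi)$; from $\Phi_v(z)=d(v(z))|v'(z)|^{p-2}v'(z)=-d(\xi)|v'(z)|^{p-1}$ (recall $v'(z)<0$) one gets $v'(z)=-(y(\xi)/d(\xi))^{1/(p-1)}$, and differentiating $y$, using \eqref{eq:differ} and $\kappa=d^{1/(p-1)}h$, yields $\dot y(\xi)=cg(\xi)-f(\xi)-\kappa(\xi)/y(\xi)^{1/(p-1)}$, i.e.\ the first line of \eqref{odeprob2}. Finally, as $\xi\to0^+$ one has $v^{-1}(\xi)\to\sup I_v$, so $y(0^+)=-\lim\Phi_v=0$ either by Definition \ref{defsol}\ref{defnew} (if $\sup I_v=+\infty$) or by Definition \ref{defsol}\ref{defb} and continuity of $\Phi_v$ (if $\sup I_v<+\infty$); similarly $y(1^-)=0$.

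\textbf{From \eqref{odeprob2} to \eqref{odeprob}.} Given such a $y$, the integrand $(d/y)^{1/(p-1)}$ defining $w$ is positive, piecewise continuous and locally bounded on $(0,1)$ (here \eqref{ip:d} and the continuity and positivity of $y$ enter), so $w$ is a well-defined, continuous, strictly decreasing function on $(0,1)$, of class $C^1$ off $\D(d)$, with $w'=-(d/y)^{1/(p-1)}$. Put $a:=w(1^-)\in[-\infty,0)$, $b:=w(0^+)\in(0,+\infty]$ and $(a,b):=w((0,1))$; then $w^{-1}:(a,b)\to(0,1)$ is strictly decreasing with limits $1$ at $a^+$ and $0$ at $b^-$. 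Extend $w^{-1}$ to $v:\R\to[0,1]$ by setting $v\equiv1$ on $(-\infty,a]$ and $v\equiv0$ on $[b,+\infty)$ whenever $a$, resp.\ $b$, is finite; then $v\in C(\R)$, $v(-\infty)=1$, $v(+\infty)=0$, $\Gamma_v=w(\D(d))$, $v\in C^1(\R\setminus\Lambda_v)$, and $v'(z)=-(y(v(z))/d(v(z)))^{1/(p-1)}$ on $(a,b)\setminus w(\D(d))$. Next I would set $\Phi_v:=-y\circ v$ on $(a,b)$ and $\Phi_v:=0$ on $\R\setminus(a,b)$ and verify Definition \ref{defsol}: the boundary values $y(0^+)=y(1^-)=0$ make $\Phi_v$ continuous on $\R$ and give $\lim_{z\to\pm\infty}\Phi_v=0$ (trivial when the relevant endpoint of $(a,b)$ is finite, and from the boundary conditions of \eqref{odeprob2} otherwise), i.e.\ \ref{defnew} and \ref{defb}; relation \ref{defa} is immediate from $d(v)|v'|^{p-2}v'=-y\circ v$. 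For \ref{defc}, on $(a,b)$ the function $\Phi_v=-y\circ v$ is continuous and $C^1$ off the finite set $w(\Theta)$, and on $(a,b)\setminus w(\Theta)$, differentiating $\Phi_v$ and using the ODE in \eqref{odeprob2}, the formula for $v'$, and $\kappa=d^{1/(p-1)}h$ recovers \eqref{eq:differ}; integrating piecewise over $[z_1,z_2]\subset(a,b)$ and changing variables $\xi=v(z)$ in the term containing $v'$ (legitimate since $cg-f\in L^\infty(0,1)$ and $v$ is absolutely continuous on compacta) produces \eqref{eq:integr} for all $z_1,z_2\in(a,b)$. This identity extends to all $z_1,z_2\in\R$: on $\R\setminus(a,b)$ both $\Phi_v$ and $v$ are constant with $h(v)\in\{h(0),h(1)\}=\{0\}$, so only the cases straddling $a$ or $b$ remain, and these follow by letting the relevant endpoint of integration tend to $a^+$ or $b^-$ and invoking the continuity of $\Phi_v$, of $v$ and of the primitive of $cg-f$, together with $h\in L^\infty(0,1)$.

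I expect the main obstacle to be the regularity bookkeeping rather than the ODE algebra: keeping track that $v^{-1}$ and $w$ are $C^1$ exactly away from the discontinuity sets, that the $\Phi_v$ produced in the second direction is continuous and piecewise $C^1$ so that \eqref{eq:integr} can be reassembled across the finitely many jump points, and — most delicately — coping with the fact that $w$ need not be surjective onto $\R$, so that $(a,b)$ may be a bounded interval (the sharp case, occurring when $d$ degenerates at an equilibrium). This is what forces the constant extension of $w^{-1}$ and the limiting argument needed to validate \eqref{eq:integr} at and beyond the finite endpoints, and it is also where one must reconcile condition \ref{defnew} with the possibility that an endpoint of $(a,b)$ is finite.
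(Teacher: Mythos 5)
Your proof follows essentially the same route as the paper's: the first direction via $y:=-\Phi_v\circ v^{-1}$ combined with Proposition \ref{prop:nec} and Definition \ref{defsol}\ref{defnew}--\ref{defb} for the boundary values, and the second direction via the same $w$, the constant extension of $w^{-1}$, and $\Phi_v:=-y\circ v$, checked against Definition \ref{defsol}. The only step you gloss over and the paper spells out is that at points of $v^{-1}(\D(d))$ the constructed $v$ is genuinely not $C^1$ (the one-sided derivatives differ since $d$ has a non-removable jump there and $y>0$), which is exactly what makes condition \ref{defa} -- an identity required on \emph{every} open interval where $v$ is $C^1$ -- hold without exception; this is a one-line verification, so it is a minor omission rather than a gap.
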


\begin{proof}
Pick any $v\in C(\R)\cap C^1(\R\setminus\Lambda_v)$ solution to \eqref{odeprob}. Then, for every $\xi\in(0,1)\setminus\Theta$, one has $v^{-1}(\xi)\in I_v\setminus v^{-1}(\Theta)$. Hence, \eqref{eq:differ} with $z=v^{-1}(\xi)$  ensures
$$ \Phi_v'(v^{-1}(\xi))+(cg(\xi)-f(\xi))v'(v^{-1}(\xi))+h(\xi)=0 \quad \mbox{for all}\;\; \xi\in(0,1)\setminus\Theta. $$
Dividing by $-v'(v^{-1}(\xi))$, which is positive, entails
\beq \label{eq:doty} -\frac{\Phi_v'(v^{-1}(\xi))}{v'(v^{-1}(\xi))}-(cg(\xi)-f(\xi))-\frac{h(\xi)}{v'(v^{-1}(\xi))}=0 \quad \mbox{for all}\;\; \xi\in(0,1)\setminus\Theta. \eeq
Setting $y(\xi):=-\Phi_v(v^{-1}(\xi))$ for all $\xi\in(0,1) $, it turns out that $y\in C(0,1)\cap C^1((0,1)\setminus\Theta)$ with $\dot y(\xi)=-\frac{\Phi_v'(v^{-1}(\xi))}{v'(v^{-1}(\xi))}$. Therefore, by \eqref{eq:doty} we get
$$ \dot y(\xi)-(cg(\xi)-f(\xi))+\frac{\kappa(\xi)}{y(\xi)^{\frac{1}{p-1}}}=0 \quad \mbox{for all}\;\; \xi\in(0,1)\setminus\Theta. $$
Moreover, since $\Phi_v(z)<0$ for all $z\in I_v$ (see Proposition \ref{prop:nec}), then $y(\xi)>0$ for all $\xi\in(0,1)$. Finally, by Definition \ref{defsol}\ref{defnew}--\ref{defb}, we have $\Phi_v(v^{-1}(\xi))\to 0$ as $\xi\to 0^+$ (resp., $\xi\to 1^-$), so $y(0^+)=0$ (resp., $y(1^-)=0$).

\medskip
Now let $y\in C(0,1)\cap C^1((0,1)\setminus\Theta)$ be a solution to \eqref{odeprob2}. Since $y(\xi)>0$ for all $\xi\in(0,1)$ and $d^{\frac{1}{p-1}}=\frac{\kappa}{h}\in L^1(\frac{1}{2},\xi)$ for all $\xi\in(0,1)$, due to \eqref{ip:acca} and \eqref{ip:kappa}, then $w$ is well-defined and strictly decreasing in $(0,1)$. Moreover, $w$ is differentiable at every \(\xi\in (0,1)\setminus \D(d)\), with $w'(\xi)=-\left(\frac{d(\xi)}{y(\xi)}\right)^{\frac{1}{p-1}}<0$. 

Set $a:=w(1^-)$ and $b:=w(0^+)$ (which may be one or both infinite), and consider 
\beq \label{eq:inverse} v(z):=w^{-1}(z)\ , \quad \Phi_v(z):= -y(v(z))\ , \quad \text{for all }  z\in(a,b).\eeq 
If either \(a\) or \(b\) is finite, we extend the functions \(v, \Phi_v\) as constants, i.e. we put
\beq v(z)\equiv 1 \ \text{ in } (-\infty,a]; \quad v(z)\equiv 0\ \text{ in } [b,+\infty); \quad \Phi_v(z)\equiv 0 \ \text{ in } \R\setminus (a,b).  \label{eq:exten}\eeq

Of course, $v$ and \(\Phi_v\) are continuous in $\R$ and \(v\) is strictly decreasing in \(I_v:=(a,b)\). Moreover, $v$ is 
differentiable at any $z\in I_v\setminus v^{-1}(\D(d))$, with $v'(z)=-\left(\frac{y(v(z))}{d(v(z))}\right)^{\frac{1}{p-1}}<0$. Hence, we have 
\[\Phi_v(z)= - y(v(z))= -d(v(z))|v'(z)|^{p-1}<0 \quad \text{for every } z \in I_v\setminus v^{-1}(\D(d)).
\]
On the other hand, if \(z\in v^{-1}(\D(d))\), then \(v\) is not \(C^1\) in a neighborhood of  \(z\). Indeed, otherwise we have \(v'(z)\ne 0\) because \(d\) is bounded in a neighborhood of \(v(z)\), and then 
\[ \lim_{\xi \to v(z)} d(\xi)= \lim_{\xi \to v(z)} \frac{|\Phi_v(v^{-1}(\xi))|}{|v'(v^{-1}(\xi))|^{p-1}}=  \frac{|\Phi_v(z)|}{|v'(z)|^{p-1}},\]
contradicting \(v(z)\in \D(d)\) (recall that we do not consider removable discontinuities; see the Introduction).
So, \(\Phi_v\) satisfies Definition \ref{defsol}\ref{defa}.

Moreover, \(\Phi_v\) is differentiable at every $z\in I_v\setminus v^{-1}(\Theta)$, with \(\Phi_v'(z) = -\dot y(v(z)) v'(z)\). 
Then, since $y$ solves \eqref{odeprob2}, for every $z\in I_v\setminus v^{-1}(\Theta)$ we obtain
\begin{equation}
\label{eq:diffcomp}
\begin{aligned}
\Phi_v'(z)=-\dot y(v(z))v'(z) &= 
-[cg(v(z))-f(v(z))]v'(z) + \frac{d(v(z))^{\frac{1}{p-1}}h(v(z))}{y(v(z))^{\frac{1}{p-1}}}v'(z)
\\
&= -[cg(v(z))-f(v(z))]v'(z) -h(v(z)), \\
\end{aligned}
\end{equation}
i.e., \eqref{eq:differ}.

Finally, 
since \(v\) and \(\Phi_v\) are absolutely continuous in any compact interval (see Remark \ref{rem:AC}), integrating 
\eqref{eq:diffcomp} in a generic interval \([z_1,z_2]\subseteq \overline{I}_v\) produces \eqref{eq:integr} in $\overline{I}_v$. Outside $\overline{I}_v$, \eqref{eq:integr} is straightforward.

The validity of the properties in Definition \ref{defsol}\ref{defnew}--\ref{defb} is a trivial consequence of \eqref{eq:inverse} and \eqref{eq:exten}.

\end{proof}






The next result provides a necessary condition for the existence of solutions. 

\begin{lemma} If problem \eqref{odeprob2} admits solutions then, put \(\lambda:=\ell_p^\frac{1}{p-1}\) (see \eqref{notation2}), 
  we have \(\lambda<+\infty\). Moreover, setting \(\eta(t):= t^\frac{p}{p-1} - (cg(0)-f(0))t^\frac{1}{p-1} + \lambda\), we have  \( \dis\min_{t\ge 0}\eta(t) \le 0\).
\label{lem:nuovo}
\end{lemma}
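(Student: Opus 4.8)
The statement is a necessary condition extracted from the behavior of a solution $y$ to \eqref{odeprob2} near $\xi=0^+$. The plan is to analyze the ODE $\dot y = cg(\xi)-f(\xi)-\kappa(\xi)y^{-1/(p-1)}$ as $\xi\to 0^+$, using $y(0^+)=0$ and $y>0$ in $(0,1)$. Since $f,g$ are bounded and $g(0),f(0)$ are the one-sided limits at $0$, the term $cg(\xi)-f(\xi)$ stays bounded, so the singular term $\kappa(\xi)y^{-1/(p-1)}$ governs the leading-order dynamics. I would first introduce the natural rescaled quantity $u(\xi):=y(\xi)/\xi$ (or equivalently track $y(\xi)^{p-1}/\xi$, which is exactly the ratio in the definition of $\ell_p$), and study its limit behavior; the claim $\lambda<+\infty$ says precisely that $\liminf_{\xi\to0^+} y(\xi)/\xi$ is finite in an appropriate sense, and the claim $\min_{t\ge0}\eta(t)\le 0$ pins down a compatibility relation at the equilibrium.

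The key steps I would carry out are as follows. \textbf{Step 1 (a priori lower bound near $0$).} Using the local a priori estimate mentioned in the introduction (``a local a priori estimate from below'') together with the monotonicity and positivity of $y$, derive that $y(\xi)$ cannot decay faster than linearly: more precisely, since near $0$ we have $\dot y(\xi) \le \|cg-f\|_\infty - \kappa(\xi)y(\xi)^{-1/(p-1)}$, if $y$ were too small the right-hand side would be very negative, forcing $y$ to decrease, which together with $y(0^+)=0$ and $y>0$ gives a contradiction. This yields $\liminf_{\xi\to0^+} y(\xi)/\xi =: \beta \in (0,+\infty]$ and, crucially, a bound forcing $\ell_p<+\infty$ (hence $\lambda<+\infty$): indeed $\kappa(\xi)^{p-1}/\xi = (\kappa(\xi)/y(\xi)^{1/(p-1)})^{p-1}\cdot (y(\xi)/\xi)$ and the first factor must be controlled along a sequence where $\dot y$ is not too large. \textbf{Step 2 (liminf analysis along a good sequence).} Along a sequence $\xi_k\to0^+$ realizing $\liminf_{\xi\to0^+} y(\xi_k)/\xi_k = \beta$ one has $\dot y(\xi_k)\to \beta$ (a standard l'Hôpital-type / mean-value argument using that $y(\xi)/\xi$ has a liminf), and simultaneously one can arrange $\kappa(\xi_k)^{p-1}/\xi_k \to \ell_p$ (or $\le \ell_p$). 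Plugging into the ODE: $\dot y(\xi_k) = cg(\xi_k)-f(\xi_k) - \kappa(\xi_k) y(\xi_k)^{-1/(p-1)}$, rewrite $\kappa(\xi_k)y(\xi_k)^{-1/(p-1)} = (\kappa(\xi_k)^{p-1}/\xi_k)^{1/(p-1)}\,(\xi_k/y(\xi_k))^{1/(p-1)}$, and pass to the limit to obtain $\beta \le cg(0)-f(0) - \ell_p^{1/(p-1)}\beta^{-1/(p-1)}$, i.e. (with $t:=\beta^{1/(p-1)}$, so $\beta = t^{p-1}$... let me instead set $t$ via $\beta = t^{p-1}$, giving $t^{p-1} + \lambda t^{-1} \le cg(0)-f(0)$, hence $t^p + \lambda \le (cg(0)-f(0))t$, which is exactly $\eta(t)\le0$). \textbf{Step 3 (conclusion).} This produces a value $t\ge 0$ with $\eta(t)\le 0$, hence $\min_{t\ge0}\eta(t)\le0$. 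If $\beta=+\infty$ one needs a separate argument showing this too leads to $\eta$ having a nonpositive minimum, or better, showing $\beta=+\infty$ is impossible under $\ell_p>0$; I would handle this by noting that large $y/\xi$ makes $\dot y$ large positive which is consistent, so the real content is that the $\liminf$ being the relevant extremal quantity forces the inequality in the right direction.

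\textbf{Main obstacle.} The delicate point is making the liminf/mean-value argument in Step 2 rigorous in the presence of the finitely many discontinuity points $\Theta$ and the fact that $y$ is only $C^1$ off $\Theta$: one must choose the sequence $\xi_k$ avoiding $\Theta$ (harmless, $\Theta$ finite) and justify that $\dot y(\xi_k)$ can be taken close to $\liminf y(\xi)/\xi$ — this is where one uses that if $y(\xi)/\xi$ has liminf $\beta$ then there are points where $\dot y$ is arbitrarily close to $\beta$ from below, via a contradiction argument (if $\dot y(\xi)\ge\beta+\delta$ on a whole interval $(0,\xi_0)$ then $y(\xi)\ge(\beta+\delta/2)\xi$ near $0$, contradicting the liminf being $\beta$). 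Coordinating this with the independent choice of a sequence along which $\kappa(\xi)^{p-1}/\xi\to\ell_p$ requires a simultaneous-selection argument: one shows the inequality $\eta(t)\le 0$ must hold for $t=\beta^{1/(p-1)}$ by taking, for each small $\delta$, a point $\xi$ where \emph{both} $\dot y(\xi) < \beta+\delta$ and $\kappa(\xi)^{p-1}/\xi < \ell_p + \delta$ hold — such points exist because the first condition holds on a set accumulating at $0$ of ``full'' character while the second holds on a set accumulating at $0$, and one must check these two sets intersect arbitrarily close to $0$; alternatively, and more cleanly, use that on the set where $\dot y$ is small, $\kappa(\xi)^{p-1}/\xi$ is forced by the ODE to be close to its liminf, so one only needs one sequence. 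I expect the cleanest route is: pick $\xi_k\to0$ with $\dot y(\xi_k)\to\beta$; then from the ODE $\kappa(\xi_k)y(\xi_k)^{-1/(p-1)} = cg(\xi_k)-f(\xi_k)-\dot y(\xi_k) \to cg(0)-f(0)-\beta$, and since $y(\xi_k)/\xi_k \ge \beta - o(1)$, this forces $\kappa(\xi_k)^{p-1}/\xi_k$ to be bounded, whence $\ell_p<\infty$, and a limit computation gives $\eta(t)\le0$.
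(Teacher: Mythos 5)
Your plan has a genuine gap at exactly the point you flag as the ``main obstacle'', and the ``cleanest route'' you propose does not close it. First, the assertion that along a sequence realizing $\beta:=\liminf_{\xi\to0^+}y(\xi)/\xi$ one has $\dot y(\xi_k)\to\beta$ is false; what a mean-value argument gives is only $\liminf_{\xi\to 0^+}\dot y\le\beta\le\limsup_{\xi\to0^+}\dot y$, so you may select a sequence with $\dot y(\xi_k)\to\beta$, but along \emph{that} sequence you only know $\varphi(\xi_k):=y(\xi_k)/\xi_k\ge\beta-o(1)$, not $\varphi(\xi_k)\to\beta$. This ruins the final limit computation: from the ODE, $\kappa(\xi_k)y(\xi_k)^{-\frac{1}{p-1}}\to cg(0)-f(0)-\beta$, and writing $\kappa(\xi_k)\xi_k^{-\frac{1}{p-1}}=\kappa(\xi_k)y(\xi_k)^{-\frac{1}{p-1}}\varphi(\xi_k)^{\frac{1}{p-1}}$ with $\varphi(\xi_k)\to k_0\ge\beta$ (after extracting), the best you obtain is $(\beta-(cg(0)-f(0)))\,k_0^{\frac{1}{p-1}}+\lambda\le0$ with $\beta\le k_0$. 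Since the substitution $\beta\le k_0$ goes in the \emph{wrong} direction, this is strictly weaker than $\eta(k_0)\le0$ and does not imply $\min_{t\ge0}\eta(t)\le0$: e.g.\ for $p=2$ the inequality $(\beta-A)k_0+\lambda\le0$ with $\beta=0$ and $k_0$ large is compatible with $\lambda>A^2/4$, i.e.\ with $\min\eta>0$. Your alternative claim that ``on the set where $\dot y$ is small, $\kappa(\xi)^{p-1}/\xi$ is forced to be close to its liminf'' is likewise unjustified: the ODE there controls $\kappa/y^{\frac{1}{p-1}}$, and converting to $\kappa^{p-1}/\xi$ again requires an upper control on $\varphi$ along the same sequence, which you do not have. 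The same missing upper bound affects your proof of $\lambda<+\infty$ (boundedness of $\kappa(\xi_k)^{p-1}/\xi_k$ is asserted, not proved), and the case $\beta=+\infty$ is left unresolved.

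The paper's proof works with the opposite extremal quantities, and this is what makes the signs align: it uses the identity $(\dot y-(cg-f))\varphi^{\frac{1}{p-1}}=-\kappa(\xi)\xi^{-\frac{1}{p-1}}$, the neighborhood-wide bound $\kappa(\xi)\xi^{-\frac{1}{p-1}}\ge\lambda-\eps$ coming from the \emph{liminf} definition of $\ell_p$, and a sequence realizing $M:=\limsup_{\xi\to0^+}\dot y$, along which $\varphi(\xi_n)\to k$; since $k\le\limsup\varphi\le M$, replacing the limit of $\dot y$ by $k$ \emph{from below} preserves the inequality and yields $\eta(k)\le\eps$, hence $\min_{t\ge0}\eta\le0$. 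For $\lambda<+\infty$ the paper argues by a separate dichotomy on $\limsup\varphi$: if it is finite and $\lambda=+\infty$, the equation forces $\dot y$ to be uniformly negative near $0$, which with $y(0^+)=0$ makes $y$ negative, a contradiction. If you want to salvage your liminf-based route, you would at least need the elementary a priori bound $y(\xi)\le\|cg-f\|_\infty\,\xi$ (which gives $\varphi$ bounded and removes the $\beta=+\infty$ case, and does fix the $\lambda<+\infty$ half), but even then the inequality you extract at $t=\beta$ does not follow without a selection of points where $\dot y\gtrsim\varphi$, i.e.\ essentially the paper's limsup argument.
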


\begin{proof}
Let \(y\) be a solution to \eqref{odeprob2}. 
Put \(\varphi(\xi):=\frac{y(\xi)}{\xi}\), \(\xi\in (0,1)\). We set
$$ L:=\limsup_{\xi\to0^+}\varphi(\xi), \quad M:=\limsup_{\xi\to0^+}\dot y(\xi). $$
Notice that $L\leq M$: indeed, taking any sequence $\xi_n\to 0$ such that $\varphi(\xi_n)\to L$, by the mean value theorem there exists $\zeta_n\to 0$ such that $\dot y(\zeta_n)=\varphi(\xi_n)$ for all $n\in\N$, yielding $M\geq L$.

We have, for all $\xi\in(0,1)\setminus\Theta$,
\beq
(\dot y(\xi)  - (cg(\xi)-f(\xi))) \varphi(\xi)^{\frac{1}{p-1}}= -\frac{\kappa(\xi)}{\xi^{\frac{1}{p-1}}}.
\label{eq:modif}
\eeq
So, if \(\lambda=+\infty\), 
 then 
\beq
\label{eq:limite}
\lim_{\xi\to 0}  (\dot y(\xi)  - (cg(\xi)-f(\xi))) \varphi(\xi)^{\frac{1}{p-1}}=-\infty.
\eeq
On the other hand, if $L=+\infty$ then $M=+\infty$ and, for a suitable sequence $\xi_n\to 0$,
$$(\dot y(\xi_n)  - (cg(\xi_n)-f(\xi_n))) \varphi(\xi_n)^{\frac{1}{p-1}}\geq 0,$$
contradicting \eqref{eq:limite}. Instead, if $L<+\infty$, then $\|\varphi\|_\infty<+\infty$ and we choose $\Lambda>\|\varphi\|_\infty^{\frac{1}{p-1}}\|cg-f\|_\infty$, so that \eqref{eq:limite} furnishes $\delta>0$ (depending on $\Lambda$) such that
$$  (\dot y(\xi)  - (cg(\xi)-f(\xi))) \varphi(\xi)^{\frac{1}{p-1}}<-\Lambda $$
for all $\xi\in(0,\delta)$, whence
$$ \dot y(\xi) \leq -\frac{\Lambda}{\|\varphi\|_\infty^{\frac{1}{p-1}}}+\|cg-f\|_\infty<0, $$
due to the choice of $\Lambda$. This forces $y$ to be negative in $(0,\delta)$, contradicting \eqref{odeprob2}.

Let us now consider the function \(\eta\) and fix a value \(\varepsilon>0\). By the definition of \(\lambda\) and \eqref{eq:modif},  there exists a positive \(\delta=\delta_\varepsilon>0\) such that
\beq(\dot y(\xi)  - (cg(\xi)-f(\xi))) \varphi(\xi)^{\frac{1}{p-1}} +\lambda \le  \varepsilon \quad \text{ for every } \xi \in (0,\delta).\label{eq:dis1}\eeq
Take any sequence $\xi_n\to0^+$ such that $\dot y(\xi_n)\to M$. Up to subsequences, $\varphi(\xi_n)\to k\in[0,+\infty]$. Then \eqref{eq:dis1} rewrites as
\beq(\dot y(\xi_n)  - (cg(\xi_n)-f(\xi_n))) \varphi(\xi_n)^{\frac{1}{p-1}} +\lambda \le  \varepsilon \quad \text{ for every } n\in\N.\label{eq:dis1seq}\eeq
If $k=+\infty$, then $L=M=+\infty$, so letting $n\to\infty$ in \eqref{eq:dis1seq} leads to a contradiction. Hence $k<+\infty$. Now, if $M=+\infty$, then $\dot y(\xi_n)>k$ for any $n$ sufficiently large, so that
$$(k  - (cg(\xi_n)-f(\xi_n))) \varphi(\xi_n)^{\frac{1}{p-1}} +\lambda \le  \varepsilon,$$
whence
$$\eta(k) = \lim_{n\to\infty} (k  - (cg(\xi_n)-f(\xi_n))) \varphi(\xi_n)^{\frac{1}{p-1}} +\lambda \leq \eps,$$
so $\displaystyle{\min_{t\geq 0} \eta(t)\leq \eta(k)\leq 0}$, since $\varepsilon$ was arbitrary. Otherwise, if both $k$ and $M$ are finite, then \eqref{eq:dis1seq} entails $(M-(cg(0)-f(0)))k^{\frac{1}{p-1}}+\lambda\leq\varepsilon$. 
Since \(k\le L\le M\), we have
$$ \eta(k) = (k-(cg(0)-f(0)))k^{\frac{1}{p-1}}+\lambda\leq (M-(cg(0)-f(0)))k^{\frac{1}{p-1}}+\lambda\leq\varepsilon $$
for all $\varepsilon>0$, yielding $\displaystyle{\min_{t\geq 0}\eta(t) \leq \eta(k)\leq 0}$. 
\end{proof}


We conclude this section with the following result, providing a uniform positive lower bound for the solutions of \eqref{odeprob2} in the compact subsets of \((0,1)\). 

\medskip
\begin{lemma}\label{l:lowboun}
Put \(I_r:=[r,1-r]\) for every \(r\in(0,\frac12)\). Fix \(r_0\in(0,\frac12)\) such that \(\Theta\subset I_{r_0}\).

Then, for every \(r\in (0, \frac12 r_0)\) and every \(m,M>0\), there exists \(\delta=\delta_{r,m,M}>0\) such that if 

\beq \label{ip:lowboun}
\inf_{\xi\in I_r} \kappa(\xi)\ge m \quad \text{ and } \quad
\sup_{\xi\in I_r} |cg(\xi)-f(\xi)|\le M 
\eeq
we have 
\beq
\label{eq:lowerpos}
y(\xi)\ge \delta \quad \text{ for all } \xi\in I_{2r}
\eeq
for every positive solution
\(y\in C(0,1)\cap C^1((0,1)\setminus \Theta)\) of the differential equation in \eqref{odeprob2}. \end{lemma}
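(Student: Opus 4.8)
The plan is to argue by contradiction. Suppose the conclusion fails: then for some fixed $r\in(0,\tfrac12 r_0)$ and some $m,M>0$, no matter how small a candidate $\delta$ we pick, there is a point $\xi_\delta\in I_{2r}$ and a positive solution $y$ (satisfying \eqref{ip:lowboun}) with $y(\xi_\delta)<\delta$. Taking $\delta=1/n$ produces a sequence of solutions $y_n$ and points $\xi_n\in I_{2r}$ with $y_n(\xi_n)\to 0$; up to a subsequence $\xi_n\to\xi_0\in I_{2r}$. The idea is to track $y_n$ away from $\xi_n$ using the differential equation in \eqref{odeprob2}: the key structural fact is that when $y$ is small and positive, the term $-\kappa(\xi)/y^{1/(p-1)}$ is very negative, so $\dot y$ is very negative; hence $y$ can only be small on a short interval, and just to the left of that interval $y$ must have been comparably small too, which lets one propagate smallness leftward all the way toward $\xi=r$ — but there, by continuity/compactness one should reach a contradiction with positivity on the \emph{open} interval, or more precisely with a differential inequality that forces $y$ negative.

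More concretely, here is the mechanism I would make rigorous. Fix a small $\rho>0$ to be chosen. On the set where $\xi\in I_r\setminus\Theta$ and $0<y(\xi)\le \rho$, we have
\[
\dot y(\xi)=cg(\xi)-f(\xi)-\frac{\kappa(\xi)}{y(\xi)^{1/(p-1)}}\le M-\frac{m}{\rho^{1/(p-1)}}.
\]
Choosing $\rho$ small enough (depending only on $m$, $M$, $p$) we can guarantee $\dot y(\xi)\le -1$, say, whenever $0<y(\xi)\le\rho$ and $\xi\in I_r\setminus\Theta$. This is a one-sided differential inequality: as long as we stay in $I_r$ and $y$ stays below $\rho$, $y$ decreases at unit rate. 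Now take $n$ large so that $y_n(\xi_n)<\rho$. Moving \emph{rightward} from $\xi_n$, $y_n$ keeps dropping at rate $\ge 1$ until it would become negative or leave $I_r$; but it cannot become negative (by \eqref{odeprob2}, $y>0$ on $(0,1)$), so $y_n$ must leave $I_r$ on the right before dropping below $0$ — impossible if $\xi_n$ is bounded away from $1-r$ by more than $\rho$, which it is since $\xi_n\in I_{2r}$ (so $\xi_n\le 1-2r$) and $\rho<r$. Hence we reach the contradiction directly, without even needing to propagate leftward; the crossing of the finitely many discontinuity points in $\Theta\subset I_{r_0}\subset I_r$ is harmless because $y_n$ is continuous there and the inequality $\dot y_n\le -1$ holds on each subinterval between consecutive points of $\Theta$, so $y_n$ still decreases across $I_{2r}$ by at least $|1-2r-\xi_n|\ge$ (a fixed positive amount) minus nothing — the continuity glues the pieces.

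The step I expect to be the main obstacle is the bookkeeping around $\Theta$ and the endpoints: one must be careful that the differential inequality is only valid on $(0,1)\setminus\Theta$, so the decrease estimate must be obtained by summing over the finitely many maximal subintervals of $I_{2r}\setminus\Theta$ and invoking continuity of $y_n$ at the (finitely many) points of $\Theta$. A second, more delicate point is the choice of $\rho$: I need $\rho$ small enough that $\dot y\le -1$ on $\{y\le\rho\}$, but I also need $\rho\le r$ (so that a unit-rate drop over a length-$r$ interval forces $y$ through $0$); both are fine since $r$, $m$, $M$ are fixed once and for all, so $\delta$ can then simply be taken smaller than $\rho$ — indeed one can take $\delta=\rho$, because if $y_n(\xi_n)<\rho$ at some $\xi_n\in I_{2r}$, the rightward drop argument above already gives the contradiction. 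Thus the conclusion \eqref{eq:lowerpos} holds with $\delta:=\delta_{r,m,M}$ equal to this $\rho$. I would also double-check that the hypothesis $r<\tfrac12 r_0$, which ensures $I_{2r}\supset I_{r_0}\supset\Theta$ with room to spare, is exactly what guarantees $\Theta$ sits well inside $I_{2r}$ and that $\xi_n\le 1-2r$ leaves a gap of length $\ge r>\rho$ to the right endpoint of $I_r$; this is the only place the quantitative relation between $r$ and $r_0$ enters.
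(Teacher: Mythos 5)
Your argument is correct, and it reaches the same quantitative mechanism as the paper by a slightly more elementary route. The paper proves the bound by building an explicit sub-solution: for $\xi_0\in I_{2r}$ it takes $\varphi(\xi)=(\delta^{p'}-m(\xi-\xi_0))^{\frac{1}{p'}}$ on $[\xi_0,\xi_0+\delta^{p'}/m]\subset I_r$, checks that $\dot\varphi$ strictly dominates the right-hand side of the equation in \eqref{odeprob2} when $\delta<\min\{(rm)^{1/p'},(m/(p(M+1)))^{p-1}\}$, and then runs a first-contact comparison backwards from the right endpoint (where $\varphi=0<y$) to conclude $y(\xi_0)\ge\varphi(\xi_0)=\delta$. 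You instead pick $\rho\le\min\{r,(m/(M+1))^{p-1}\}$ so that $\dot y\le M-m\rho^{-\frac{1}{p-1}}\le-1$ wherever $y\le\rho$ on $I_r\setminus\Theta$, and show by a forced-zero-crossing contradiction that $y<\rho$ at some $\xi_0\in I_{2r}$ would drive $y$ below $0$ within distance $\rho<r$, i.e.\ before leaving $I_r$ on the right; in effect you replace the paper's nonlinear barrier by a linear one ($\dot y\le-1$), which avoids both the explicit formula and the comparison lemma at the contact point, at the cost of a first-crossing/gluing argument: you must (as you note) integrate the slope bound piecewise on the finitely many components of the interval minus $\Theta$ and observe that $y$ cannot re-cross the level $\rho$ while it is decreasing, which is the standard exit-time bookkeeping and is fine since $y\in C(0,1)\cap C^1((0,1)\setminus\Theta)$. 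Two small remarks: the hypothesis $\Theta\subset I_{r_0}$ plays no real role in your version (continuity across the finitely many points of $\Theta$ is all you use, exactly as you say), whereas the relation $\rho\le r$ together with $\xi_0\le 1-2r$ is indeed the only place the gap between $I_{2r}$ and $I_r$ enters; and your $\delta$ has a slightly different (non-$p$-homogeneous) form than the paper's, which is immaterial for the statement since $\delta$ is allowed to depend on $r,m,M$.
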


\begin{proof} Fix \(r\in(0,\frac12 r_0)\) and \(m,M>0\). Choose
  $\delta=\delta_{r,m,M}<
 \min\left\{(r m)^{\frac{1}{p'}},\left(\frac{m}{p(M+1)}\right)^{p-1}\right\}\).  
Fix an arbitrary \(\xi_0\in I_{2r}\) and set $\varphi(\xi):=(\delta^{p'}-m(\xi-\xi_0))^{\frac{1}{p'}}$ for all $\xi\in [\xi_0,\xi_1]$, with \(\xi_1:=\xi_0+\frac{\delta^{p'}}{m}\). Note that \([\xi_0,\xi_1]\subset I_r\) by the choice of \(\delta\).

Assume that \eqref{ip:lowboun} holds true. Then,
for all \(\xi\in I_r\) and positive \(s<\left(\frac{m}{p(M+1)}\right)^{p-1}\), again by the choice of \(\delta\) we have

\beq
\label{subsol1}
s^{\frac{1}{p-1}}+(cg(\xi)-f(\xi))s^{\frac{1}{p-1}} - \kappa(\xi)< (M+1) s^{\frac{1}{p-1}}- m < -\frac{m}{p'}.
\eeq
Moreover,
$\varphi(\xi)\leq\delta<\left(\frac{m}{p(M+1)}\right)^{p-1}$ for every \(\xi\in [\xi_0,\xi_1]\). So, by \eqref{subsol1} we obtain
\begin{equation}
\label{eq:subsol2}
\dot \varphi(\xi)=-\frac{m}{p'\varphi(\xi)^{\frac{1}{p-1}}} >
1+cg(\xi)-f(\xi) - \frac{\kappa(\xi)}{\varphi(\xi)^{\frac{1}{p-1}}} \quad \text{ for every } \xi \in [\xi_0,\xi_1).
 \end{equation}

Moreover, since \(\varphi(\xi_1)=0<y(\xi_1)\), we can define 
\[\bar \xi:=\inf\{\xi\in[\xi_0,\xi_1]: \varphi(\tau)< y(\tau)\ \text{ for every } \tau\in [\xi,\xi_1] \}.\]
Let us prove that \(\bar\xi=\xi_0\). Indeed, if \(\bar\xi >\xi_0\), then \(\varphi(\bar\xi)=y(\bar\xi)>0\), so \eqref{eq:subsol2} entails
\begin{align*}
\dot \varphi(\bar \xi)&=\lim_{\xi\to \bar\xi^+} \dot \varphi(\xi)\ge 1+\limsup_{\xi \to \bar \xi^+} \left(cg(\xi)-f(\xi) - \frac{\kappa(\xi)}{\varphi(\xi)^{\frac{1}{p-1}}}\right) \\
 &\ge 1+\limsup_{\xi \to \bar \xi^+} \left(cg(\xi)-f(\xi) - \frac{\kappa(\xi)}{y(\xi)^{\frac{1}{p-1}}}\right)=1+ \limsup_{\xi\to\bar\xi^+}\dot y(\xi),
\end{align*}
which is a contradiction since \(\varphi(\bar\xi)=y(\bar\xi)\) and \(\varphi(\xi)<y(\xi)\) in a right neighborhood of \(\bar \xi\).
Therefore, we conclude that \(\bar\xi=\xi_0\), implying 
\(\delta= \varphi(\xi_0)\le y(\xi_0)\). 

The assertion follows from the arbitrariness of \(\xi_0\).

\begin{rmk}\label{r:AC}
As a consequence of Lemma \ref{l:lowboun}, if \(y\) is a solution of \eqref{odeprob2} then \(y\in W^{1,\infty}(I)\) for every closed interval \(I\subset (0,1)\). Therefore, since \(y\in C[0,1]\), we get that \(y\) is absolutely continuous in \([0,1]\).
Moreover, notice  that if problem \eqref{odeprob2} admits a solution then we necessarily have \(c\int_0^1 g(\xi) \dxi > \int_0^1 f(\xi) \dxi\). Indeed, if \(y\) is a solution, then
\[ 0=y(1^-)-y(0^+) = \int_0^1 \dot y(\xi) \dxi < \int_0^1 (cg(\xi)-f(\xi)) \dxi.\]
\end{rmk}

\end{proof}

\section{Regularized approximating problems}

The technique we adopt to prove the main result is
based on the regularization of the discontinuous coefficients of the equation in \eqref{odeprob}, in order to be able to use the known results concerning equations with continuous coefficients.
To this aim, let us introduce the concept of 
$\eps$-regularization of a function belonging to $\hat{C}(0,1)\cap L^1(0,1)$.

\begin{defi}
\label{epsreg}
Let $\phi\in\hat{C}(0,1)\cap L^1(0,1)$ and let \(A\supseteq \D(\phi)\) be a finite set in \((0,1)\), say \(A=\{\gamma_1,\ldots,\gamma_n\}\), being $\gamma_i<\gamma_{i+1}$ for all $i=1,\ldots,n-1$. Put $\gamma_0:=0$ and $\gamma_{n+1}:=1$. Finally, set $\overline{\eps}:=\frac{1}{2}\min\{\gamma_{i+1}-\gamma_i: \, i=0,\ldots,n\}$. 

For any $\eps\in(0,\overline{\eps})$, we call $\eps$-regularization of $\phi$ related to \(A\) the function $\phi_\eps^{(A)}:(0,1)\to\R$ defined as
\begin{equation*}
\phi_\eps^{(A)}(x)=\left\{
\begin{alignedat}{2}
&\phi(\gamma_i-\eps)+\frac{\phi(\gamma_i+\eps)-\phi(\gamma_i-\eps)}{2\eps}(x-(\gamma_i-\eps)) \quad &&\mbox{if} \;\; x\in[\gamma_i-\eps,\gamma_i+\eps], \;\; i=1,\ldots,n, \\
&\phi(x) \quad &&\mbox{elsewhere}. \\
\end{alignedat}
\right.
\end{equation*}
When \(A=\D(\phi)\) we simply write \(\phi_\varepsilon\).
\end{defi}

\medskip
\begin{rmk}
\label{epsregprops}

By definition, $\phi_\eps^{(A)}\in C(0,1)$ for all $\eps\in(0,\overline{\eps})$ and $\phi_\eps^{(A)}\to\phi$ locally uniformly in $(0,1)\setminus A$ as $\eps\to 0$. Moreover, it is readily seen that $(\alpha\phi)_\eps^{(A)}=\alpha \phi_\eps^{(A)}$ for all \(\alpha\in \R\) and $\eps\in(0,\overline{\eps})$.
Furthermore, if \(\phi,\psi \in \hat{C}(0,1)\cap L^1(0,1)\) and \(\alpha,\beta\in \R\), then for every finite set \(A\supseteq \D(\phi)\cup \D(\psi)\) we have \((\alpha \phi +\beta\psi)_\eps^{(A)} =  \alpha \phi_\eps^{(A)} +\beta\psi_\eps^{(A)}\).

Finally, if $I\subset(0,1)$ is a closed interval such that $\partial I \cap A = \emptyset$ and \(\phi \) is bounded in \(I\), then there exists $\hat{\eps}=\hat{\eps}(I)$ such that
\begin{equation}
\label{supest}
\sup_{I} |\phi_\eps^{(A)}| \le \sup_{I} |\phi|  \quad \mbox{for all} \;\; \eps\in(0,\hat{\eps})
\end{equation}
and
\begin{equation}
\label{infest}
\inf_I \phi_\eps^{(A)} \geq \inf_I \phi \quad \mbox{for all} \;\; \eps\in(0,\hat{\eps}).
\end{equation}
\end{rmk}

\medskip
Thanks to the \(\eps\)-regularization of the functions in \(\hat{C}(0,1)\cap L^1(0,1)\), we can reduce to problem \eqref{odeprob2} with \(f,g,h\) continuous in \([0,1]\) and \(d\) continuous in \((0,1)\) (so that \(\Theta=\emptyset\)). In such a setting, problem \eqref{odeprob2} has been investigated in \cite{M}, where an existence result has been obtained in terms of the infima and suprema of some integral averages involving the coefficients of the equation.
So, we will need  to study the behavior as \(\eps\to 0\) of such infima and suprema. In order to do that, we use  a classical tool from the Calculus of Variations: the \(\Gamma-\)convergence. Recall the following definition (see, e.g., \cite{Bra}).

\begin{defi} \label{def:Gammaconv}
Let $X$ be a topological space, $\{\Phi_n\}$ be a sequence of functions $\Phi_n:X\to[-\infty,+\infty]$, and $\Phi:X\to[-\infty,+\infty]$. We say that $\{\Phi_n\}$ $\Gamma$-converges to $\Phi$, and write $\Phi_n\stackrel{\Gamma}{\to}\Phi$, if for all $x\in X$ the following properties hold true:
\begin{enumerate}[label={(\roman*)}]
\item {\it liminf inequality:} for every $\{x_n\}\subseteq X$ such that $x_n\to x$ one has
$$ \Phi(x)\leq \liminf_{n\to\infty} \Phi_n(x_n), $$
\item {\it limsup inequality:} there exists $\{x_n\}\subseteq X$ such that $x_n\to x$ and
$$ \Phi(x)\geq \limsup_{n\to\infty} \Phi_n(x_n). $$
\end{enumerate}
\end{defi}

The following result, concerning the stability of infima under $\Gamma$-convergence, stems from \cite[Proposition 1.18(i)]{Bra}.
\begin{lemma}
\label{inf-stab}
Let $X$ be a compact topological space and $\{\Phi_n\}$ be a sequence of functions $\Phi_n:X\to[-\infty,+\infty]$ that $\Gamma$-converges to $\Phi:X\to[-\infty,+\infty]$. Then
$$ \inf_X \Phi = \lim_{n\to\infty} \inf_X \Phi_n. $$
\end{lemma}
\begin{proof}
For every $n\in\N$, choose $x_n\in X$ such that
$$ \Phi_n(x_n) < \inf_X \Phi_n + \frac{1}{n}. $$
Since $X$ is compact, then $x_n\to x\in X$, up to sub-sequences. Thus, owing to $\Phi_n\stackrel{\Gamma}{\to}\Phi$,
\begin{equation}
\label{stab1}
\inf_X \Phi \leq \Phi(x)\leq \liminf_{n\to\infty} \Phi_n(x_n) \leq \liminf_{n\to\infty} \inf_X \Phi_n.
\end{equation}
Due to the lower semi-continuity of $\Phi$ (see \cite[Proposition 1.28]{Bra}) and the compactness of $X$, there exists $y\in X$ such that $\Phi(y)=\inf_X \Phi$. Hence, according to the $\Gamma$-convergence of $\{\Phi_n\}$ to $\Phi$, there exists $\{y_n\}$ such that $y_n\to y$ and
\begin{equation}
\label{stab2}
\inf_X \Phi = \Phi(y) \geq \limsup_{n\to\infty} \Phi_n(y_n) \geq \limsup_{n\to\infty} \inf_X \Phi_n.
\end{equation}
Putting \eqref{stab1}--\eqref{stab2} together concludes the proof.
\end{proof}

\bigskip
In light of the previous result, we now prove the following technical lemma, which will allow us to get the \(\Gamma-\)convergence of the integral averages of the \(\varepsilon-\)regularizations.

\begin{lemma}
\label{lem:conv}
Let $\phi\in\hat{C}(0,1)\cap L^1(0,1)$ and let \(A\supseteq \D(\phi)\) be a finite set in \((0,1)\). Consider the $\eps$-regularization of \(\phi\) related to the set \(A\), which we simply denote by  \(\phi_\eps\), for all $\eps\in(0,\overline{\eps})$, being $\overline{\eps}$ as in Definition \ref{epsreg}. 
Assume that \(\phi\) is bounded in \(\bar I=[\bar \eps, 1-\bar\eps]\). 

For every \(x\in (0,1]\) set
\beq \label{def:Phi}
\Phi_0(x):= \fint_0^x\phi(\tau)\dtau,  \quad \Phi_\eps(x):=\fint_0^x\phi_\eps(\tau)\dtau, \quad \text{ for all } \eps\in(0,\overline{\eps}).
\eeq

Then, for every pair of sequences \(\{x_n\}\subset (0,1] \), \(\{\eps_n\}\subset[0,\bar \eps) \), with \(x_n\to \bar x\) and \(\eps_n\to 0\), we have:
\beq \label{eq:conv}
\Phi_{\eps_n}(x_n) \to \Phi_0(\bar x) \qquad \text{ if } \bar x >0
\eeq
and
\beq \label{eq:conv2}
\liminf_{x\to 0^+}\Phi_0(x)\leq \liminf_{n\to\infty}\Phi_{\eps_n}(x_n)\leq\limsup_{n\to\infty}\Phi_{\eps_n}(x_n)\leq\limsup_{x\to 0^+}\Phi_0(x).
\eeq


\end{lemma}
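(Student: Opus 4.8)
The plan is to split the difference $\Phi_{\eps_n}(x_n)-\Phi_0(\bar x)$ into two pieces: a ``regularization error'' $\Phi_{\eps_n}(x_n)-\Phi_0(x_n)$, coming from replacing $\phi$ by $\phi_{\eps_n}$ on the same interval $(0,x_n)$, and a ``continuity error'' $\Phi_0(x_n)-\Phi_0(\bar x)$, coming from moving the upper endpoint. For the first piece, observe that $\phi_{\eps_n}$ and $\phi$ differ only on the union of the intervals $[\gamma_i-\eps_n,\gamma_i+\eps_n]$, and on each such interval $\phi_{\eps_n}$ is the linear interpolant of the two endpoint values $\phi(\gamma_i\mp\eps_n)$; since $\phi$ is bounded on $\bar I=[\bar\eps,1-\bar\eps]$ (which contains all the $\gamma_i$ together with a uniform neighborhood, once $\eps_n<\bar\eps$), both $\phi$ and $\phi_{\eps_n}$ are bounded on those intervals by a constant $C$ independent of $n$, so
$$ \left|\int_0^{x_n}\bigl(\phi_{\eps_n}(\tau)-\phi(\tau)\bigr)\dtau\right|\le \sum_{i=1}^n\int_{\gamma_i-\eps_n}^{\gamma_i+\eps_n}\bigl(|\phi_{\eps_n}|+|\phi|\bigr)\dtau\le 4Cn\,\eps_n\to 0. $$
Hence $\bigl|\Phi_{\eps_n}(x_n)-\Phi_0(x_n)\bigr|=\frac{1}{x_n}\bigl|\int_0^{x_n}(\phi_{\eps_n}-\phi)\bigr|\to 0$ whenever $x_n\to\bar x>0$, because then $x_n$ is bounded away from $0$.

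For the second piece, the function $x\mapsto\int_0^x\phi(\tau)\dtau$ is absolutely continuous on $[0,1]$ (as $\phi\in L^1(0,1)$), hence continuous, and $\bar x>0$ keeps us away from the singularity of $1/x$; therefore $\Phi_0(x_n)=\frac{1}{x_n}\int_0^{x_n}\phi\to\frac{1}{\bar x}\int_0^{\bar x}\phi=\Phi_0(\bar x)$. Combining the two estimates gives \eqref{eq:conv}. For \eqref{eq:conv2} the same decomposition applies: the regularization error $\Phi_{\eps_n}(x_n)-\Phi_0(x_n)$ is still $O(n\eps_n/x_n)$, but now $x_n\to 0$, so one cannot conclude it is infinitesimal directly. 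The remedy is to take $\eps_n$ small enough compared to the scale at which the $\gamma_i$ sit: for $n$ large, $x_n<\bar\eps\le\gamma_1-\eps_n$, so the interval $(0,x_n)$ contains none of the ``modified'' intervals $[\gamma_i-\eps_n,\gamma_i+\eps_n]$ at all, whence $\phi_{\eps_n}\equiv\phi$ on $(0,x_n)$ and $\Phi_{\eps_n}(x_n)=\Phi_0(x_n)$ exactly. Thus for large $n$ we simply have $\Phi_{\eps_n}(x_n)=\Phi_0(x_n)$ with $x_n\to0^+$, and \eqref{eq:conv2} is immediate from the definition of $\liminf$ and $\limsup$ as $x\to0^+$.

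The only subtlety, and the main thing to be careful about, is the interplay between $\eps_n\to0$ and $x_n\to0$ in case \eqref{eq:conv2}: one must verify that eventually $x_n<\gamma_1-\eps_n$ so that the small-average interval $(0,x_n)$ never meets the support of the modification. This follows because $\eps_n<\bar\eps=\frac12\min_i(\gamma_{i+1}-\gamma_i)$ forces $\gamma_1-\eps_n>\gamma_1-\bar\eps\ge\bar\eps>0$ (using $\gamma_0=0$, so $\bar\eps\le\frac12\gamma_1$), a fixed positive number, while $x_n\to0$; hence $x_n<\gamma_1-\eps_n$ for all large $n$. Note this also silently uses the hypothesis $A\subset(0,1)$, i.e.\ $\gamma_1>0$, so that the modification stays strictly away from the origin. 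Everything else is a routine estimate, and no compactness or $\Gamma$-convergence machinery is needed for this lemma itself; Lemma \ref{inf-stab} enters only at the subsequent stage where \eqref{eq:conv}--\eqref{eq:conv2} are used to pass infima and suprema of the integral averages to the limit.
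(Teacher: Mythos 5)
Your proof is correct and follows essentially the same route as the paper's: bound the contribution of the modified intervals by a constant times $\eps_n$ (using the boundedness of $\phi$ on $\bar I$, which contains all the intervals $[\gamma_i-\eps_n,\gamma_i+\eps_n]$), let the unmodified part of the integral converge, and for \eqref{eq:conv2} observe that $\phi_{\eps_n}\equiv\phi$ on $[0,x_n]$ for all $n$ sufficiently large. The only cosmetic issue is the reuse of the letter $n$ both as the sequence index and as the cardinality of $A$ in your bound $4Cn\,\eps_n$, which should read $4C\,\sharp(A)\,\eps_n$.
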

\begin{proof}
Let us consider the function $\Psi:(0,1]\times[0,\overline{\eps})\to\R$ defined as $\Psi(x,\eps):=\Phi_\eps(x)$. If $\Psi$ is continuous at $(\bar x,0)$ for every $\bar x\in(0,1]$, then \eqref{eq:conv} holds: indeed, given any sequence $\{\eps_n\}\subset[0,\overline{\eps})$ such that $\eps\to 0$, for every $\bar x\in(0,1]$ and every sequence $x_n\to \bar x$ one has
$$ \lim_{n\to\infty} \Phi_{\eps_n}(x_n) = \lim_{n\to\infty} \Psi(x_n,\eps_n) = \Psi(\bar x,0) = \Phi_0(\bar x). $$

To this end, fix any $\bar x\in(0,1]$ and take any $\{(x_n,\eps_n)\}\subseteq (0,1]\times[0,\overline{\eps})$ such that $(x_n,\eps_n)\to(\bar x,0)$ as $n\to\infty$. 
For each $n\in\N$ we put  $\{\gamma_1,\ldots,\gamma_{m_n}\}=A\cap[0,x_n]$.

Therefore,  we have
\begin{equation*}
\Psi(x_n,\eps_n) = \frac{1}{x_n} \left[ \int_{[0,x_n]\setminus\cup_{j=1}^{m_n} B_{\eps_n}(\gamma_j)} \phi(\tau)\dtau + \sum_{j=1}^{m_n} \int_{[0,x_n]\cap B_{\eps_n}(\gamma_j)} \phi_{\eps_n}(\tau)\dtau \right],
\end{equation*}
since $B_{\eps_n}(\gamma_i)\cap B_{\eps_n}(\gamma_j)=\emptyset$ for all $i\neq j$, due to $\eps_n<\overline{\eps}$. Notice that the previous equation holds even if \(\eps_n=0\), since in this case \(B_{\eps_n}(\gamma_j)\) is empty, for every \(j=1,\ldots,m_n\).

Put \(\bar K:= \dis\sup_{\xi \in \bar I}|\phi(\xi)|\), for any $j\in\{1,\ldots,m_n\}$ we have
\begin{equation*}
\begin{aligned}
\int_{[0,x_n]\cap B_{\eps_n}(\gamma_j)} |\phi_{\eps_n}(\tau)| \dtau &\leq \int_{\gamma_j-\eps_n}^{\gamma_j+\eps_n} \left|\phi_{\eps_n}(\tau)\right| \dtau \\
&= \int_{0}^{2\eps_n} \left| \left[\phi(\gamma_j-\eps_n)+\frac{\phi(\gamma_j+\eps_n)-\phi(\gamma_j-\eps_n)}{2\eps_n}t\right] \right| \dt \\
&\leq \eps_n \left[2|\phi(\gamma_j-\eps_n)| + |\phi(\gamma_j+\eps_n)-\phi(\gamma_j-\eps_n)|\right] \leq 4\bar K  \eps_n.
\end{aligned}
\end{equation*}
Accordingly,
\begin{equation*}
\left| \sum_{j=1}^{m_n} \int_{[0,x_n]\cap B_{\eps_n}(\gamma_j)} \phi_{\eps_n}(\tau)\dtau \right| \leq \sum_{j=1}^{m_n} \int_{[0,x_n]\cap B_{\eps_n}(\gamma_j)} \left|\phi_{\eps_n}(\tau)\right| \dtau \leq 4\sharp(A)\bar K \eps_n   \to 0 \quad \text{as } n\to\infty,
\end{equation*}
 where $\sharp(A)$ denotes the cardinality of $A$. Moreover, 
$$\int_{[0,x_n]\setminus\cup_{j=1}^{m_n} B_{\eps_n}(\gamma_j)} \phi(\tau)\dtau \to \int_{[0,\bar x]} \phi(\tau)\dtau \quad \mbox{as} \;\; n\to\infty. $$
We conclude that $\Psi(x_n,\eps_n) \to \Psi(\bar x,0)$ as $n\to\infty$.

Finally, \eqref{eq:conv2} is an immediate consequence of the fact that $\phi_{\eps_n}\equiv \phi$ in $[0,x_n]$ for all $n\in\N$ sufficiently large.

\end{proof}

\bigskip
As a consequence of the previous lemma, we have the following convergence result.

\begin{prop} Under the same assumptions of Lemma \ref{lem:conv}, assume also that \[\dis\liminf_{x\to 0^+} \fint_0^x \phi(\tau) \dtau<+\infty.\]

Then, 
we have
\beq \label{eq:phiconv}
\inf_{x\in (0,1)} \fint_0^x \phi(\tau) \dtau = \lim_{n\to \infty} \inf_{x\in (0,1)} \fint_0^x\phi_{\eps_n} (\tau)\dtau.
\eeq

\noindent
Similarly, if 
\[\dis\limsup_{x\to 0^+} \fint_0^x \phi(\tau) \dtau<+\infty\]
we have
\beq \label{eq:phiconv2}
\sup_{x\in (0,1)} \fint_0^x \phi(\tau) \dtau = \lim_{n\to \infty} \sup_{x\in (0,1)} \fint_0^x\phi_{\eps_n} (\tau)\dtau.
\eeq

\label{pr:Gammac}

\end{prop}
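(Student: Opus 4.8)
The plan is to deduce \eqref{eq:phiconv} and \eqref{eq:phiconv2} from the inf-stability of $\Gamma$-convergence (Lemma \ref{inf-stab}) applied on the compact space $X:=[0,1]$, once we have identified the right sequence of functions and verified the two $\Gamma$-convergence inequalities using Lemma \ref{lem:conv}. Concretely, for \eqref{eq:phiconv} I would set $\Phi_n:X\to[-\infty,+\infty]$ by $\Phi_n(x):=\Phi_{\eps_n}(x)=\fint_0^x\phi_{\eps_n}(\tau)\dtau$ for $x\in(0,1]$, $\Phi_n(0):=\liminf_{x\to0^+}\Phi_{\eps_n}(x)$, and the candidate $\Gamma$-limit $\Phi(x):=\Phi_0(x)$ for $x\in(0,1]$, $\Phi(0):=\liminf_{x\to0^+}\Phi_0(x)$; note the latter is finite by hypothesis, and on $(0,1]$ each $\Phi_n$ and $\Phi$ are continuous, so the only delicate point is the behaviour at the endpoint $x=0$. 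Then $\inf_X\Phi_n=\inf_{x\in(0,1]}\Phi_n(x)=\inf_{x\in(0,1)}\fint_0^x\phi_{\eps_n}(\tau)\dtau$ (the value at $x=1$ being a limit of values on $(0,1)$, and the value at $x=0$ being a liminf of such), and likewise $\inf_X\Phi=\inf_{x\in(0,1)}\fint_0^x\phi(\tau)\dtau$, so Lemma \ref{inf-stab} gives exactly \eqref{eq:phiconv}.

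The heart of the argument is checking $\Phi_n\stackrel{\Gamma}{\to}\Phi$. For the liminf inequality, take $x_n\to\bar x$ in $[0,1]$. If $\bar x\in(0,1]$, \eqref{eq:conv} of Lemma \ref{lem:conv} gives $\Phi_{\eps_n}(x_n)\to\Phi_0(\bar x)$ directly (handling $x_n=0$ by replacing it with a point of $(0,1]$ within $1/n$ on which $\Phi_n$ is within $1/n$ of its infimum-defining liminf, which costs nothing in the limit); if $\bar x=0$, then by the right inequality chain in \eqref{eq:conv2}, together with the definition of $\Phi_n(0)$, one gets $\liminf_n\Phi_n(x_n)\ge\liminf_{x\to0^+}\Phi_0(x)=\Phi(0)$. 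For the limsup inequality, given $\bar x\in(0,1]$ the constant sequence $x_n\equiv\bar x$ works by \eqref{eq:conv}; given $\bar x=0$, pick a sequence $t_k\to0^+$ with $\Phi_0(t_k)\to\liminf_{x\to0^+}\Phi_0(x)=\Phi(0)$, and then a diagonal choice $x_n:=t_{k(n)}$ with $k(n)\to\infty$ slowly enough that $|\Phi_{\eps_n}(x_n)-\Phi_0(x_n)|\to0$ (possible since $\phi_{\eps_n}\equiv\phi$ on $[0,x_n]$ once $\eps_n$ is small relative to $x_n$, by the last sentence of the proof of Lemma \ref{lem:conv}); this yields $x_n\to0$ and $\Phi_n(x_n)=\Phi_0(x_n)+o(1)\to\Phi(0)$, giving the required $\Phi(0)\ge\limsup_n\Phi_n(x_n)$.

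The statement \eqref{eq:phiconv2} follows by the same scheme applied to $-\phi$: since $(-\phi)_{\eps_n}=-\phi_{\eps_n}$ by Remark \ref{epsregprops}, one has $\sup_x\fint_0^x\phi=-\inf_x\fint_0^x(-\phi)$ and $\limsup_{x\to0^+}\fint_0^x\phi<+\infty$ translates into $\liminf_{x\to0^+}\fint_0^x(-\phi)>-\infty$, i.e. the finiteness hypothesis needed to run the argument for $-\phi$; applying \eqref{eq:phiconv} to $-\phi$ and negating gives \eqref{eq:phiconv2}.

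I expect the main obstacle to be bookkeeping at the endpoint $x=0$: the functionals $\Phi_n$ are genuinely only defined a priori on $(0,1]$, and to place ourselves on a \emph{compact} $X$ (as required by Lemma \ref{inf-stab}) we must extend them to $x=0$ in a way that both (a) does not change the infimum and (b) is consistent with the $\Gamma$-convergence; the definition via $\liminf_{x\to0^+}$ achieves (a) trivially and (b) precisely because Lemma \ref{lem:conv} controls $\Phi_{\eps_n}(x_n)$ from above and below by $\limsup/\liminf$ of $\Phi_0$ near $0$. Once this extension is fixed, everything else is a routine application of \eqref{eq:conv} and \eqref{eq:conv2} plus the diagonal argument, and the finiteness hypotheses are exactly what guarantee $\Phi(0)\in\R$ so that Lemma \ref{inf-stab} applies without the degenerate case $\inf_X\Phi=-\infty$ causing trouble.
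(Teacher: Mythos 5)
Your treatment of \eqref{eq:phiconv} is essentially the paper's own proof: extend $\Phi_{\eps_n}$ and $\Phi_0$ to $x=0$ by the value $\liminf_{x\to0^+}\Phi_0(x)$ (the two extensions coincide because $\phi_{\eps_n}\equiv\phi$ near $0$), verify the two $\Gamma$-convergence inequalities on the compact space $[0,1]$ via \eqref{eq:conv}--\eqref{eq:conv2}, and apply Lemma \ref{inf-stab}. Your diagonal choice in the recovery sequence at $0$ is not even needed, since $\phi_{\eps_n}\equiv\phi$ on the fixed interval $[0,\gamma_1/2]$ for every $n$, so $\Phi_{\eps_n}(t_k)=\Phi_0(t_k)$ for all $n$ once $t_k$ is small.

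The step that does not go through as written is the deduction of \eqref{eq:phiconv2} by ``applying \eqref{eq:phiconv} to $-\phi$''. The hypothesis of \eqref{eq:phiconv} for $-\phi$ is $\liminf_{x\to0^+}\fint_0^x(-\phi(\tau))\dtau<+\infty$, which is equivalent to $\limsup_{x\to0^+}\fint_0^x\phi(\tau)\dtau>-\infty$, whereas the assumption you are given is $\limsup_{x\to0^+}\fint_0^x\phi(\tau)\dtau<+\infty$; these are different one-sided bounds, and your sentence identifying ``$\liminf_{x\to0^+}\fint_0^x(-\phi)>-\infty$'' as ``the finiteness hypothesis needed'' verifies the wrong one. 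In the degenerate case $\limsup_{x\to0^+}\fint_0^x\phi(\tau)\dtau=-\infty$ (allowed by the standing assumptions, e.g. $\phi(\tau)=-\tau^{-1/2}$), the black-box reduction cannot be invoked. The repair is immediate and is what the paper does: rerun your $\Gamma$-convergence argument directly, with the extension $\Phi_0(0)=\Phi_{\eps_n}(0):=\limsup_{x\to0^+}\Phi_0(x)$, show $-\Phi_{\eps_n}\stackrel{\Gamma}{\to}-\Phi_0$ on $[0,1]$, and apply Lemma \ref{inf-stab} to $-\Phi_{\eps_n}$; nothing in that argument actually breaks when the limsup is infinite. Relatedly, your closing remark that the finiteness hypotheses guarantee $\Phi(0)\in\R$ is inaccurate ($\liminf<+\infty$ still allows $\Phi(0)=-\infty$), but this is harmless since Lemma \ref{inf-stab} and Definition \ref{def:Gammaconv} are stated for $[-\infty,+\infty]$-valued functions.
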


\begin{proof}
As in Lemma \ref{lem:conv}, let us define \(\Phi_0(x)\) and \(\Phi_\eps(x)\) for \(x\in (0,1]\) by \eqref{def:Phi}. Moreover,  put
\beq \Phi_0(0)=\Phi_\eps(0)=\liminf_{x\to 0^+} \Phi_0(x).\label{eq:phi0} \eeq
Fix any sequence \(\{\eps_n\}\subset [0,\bar\eps)\) convergent to 0.

\smallskip
Since \(\phi_\eps\equiv \phi\) in a right neighborhood of 0, we have also \(\Phi_\eps\equiv \Phi_0\) in the same neighborhood. So, 
by \eqref{eq:conv2}, for any sequence \(x_n\to 0\) we have 
\[ \Phi_0(0) \le \liminf_{n \to +\infty} \Phi_0(x_n) = \liminf_{n \to +\infty} \Phi_{\eps_n}(x_n).\]
Moreover, if \(\{\bar x_n\}\) is a sequence such that \(\Phi_0(\bar x_n)\to \Phi_0(0)\), then we get 
also \(\Phi_0(0)\ge \dis\limsup_{n\to +\infty} \Phi_{\eps_n}(\bar x_n)\). 

Therefore, taking also \eqref{eq:conv} into account, we conclude that \(\Phi_{\eps_n} \stackrel{\Gamma}\to \Phi_0 \) in \([0,1]\), so 
from Lemma \ref{inf-stab} we get
\[ \inf_{x\in (0,1)} \fint_0^x \phi(\tau) \dtau =\inf_{x\in [0,1]} \Phi_0(x) = \lim_{n\to \infty} \inf_{x\in [0,1]} \Phi_{\eps_n}(x) = \lim_{n\to \infty} \inf_{x\in (0,1)} \fint_0^x\phi_{\eps_n} (\tau)\dtau,\]
that is, 
\eqref{eq:phiconv}.

The second part is analogous. Indeed, setting
\[ \Phi_0(0)=\Phi_\eps(0)=\limsup_{x\to 0^+} \Phi_0(x)\]
instead of \eqref{eq:phi0}, we have that   \(-\Phi_{\eps_n} \stackrel{\Gamma}\to -\Phi_0 \), from which we deduce the validity of \eqref{eq:phiconv2}.

\end{proof}

\section{Main results}

Now, we have all the tools to achieve the existence result for the discontinuous equation by means of  that concerning the regularized one. Using a result contained in \cite{M}, we prove the following existence result pertaining problems with continuous coefficients.

\begin{prop}
\label{p:exten}
Let \(f,g,\kappa\) be continuous functions defined in \([0,1]\), with \(\kappa(0)=\kappa(1)=0\) and \(\kappa(\xi)>0\) in \((0,1)\). Let \(K_0\) be defined by \eqref{notation}. If
\begin{equation*}
\inf_{\xi\in (0,1)} \fint_0^\xi (cg(\tau)-f(\tau)) \dtau > p'(p-1)^\frac{1}{p} K_0^\frac{1}{p'},
\end{equation*}
then problem \eqref{odeprob2} admits a solution. 
\end{prop}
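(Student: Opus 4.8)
The plan is to reduce the statement to a known existence result from \cite{M} by recognizing that, when all coefficients are continuous (so that $\Theta=\emptyset$), problem \eqref{odeprob2} is exactly the first-order singular BVP studied there. First I would recall the precise hypotheses and conclusion of the relevant theorem in \cite{M}: it concerns the equation $\dot y = (cg(\xi)-f(\xi)) - \kappa(\xi)/y^{1/(p-1)}$ with $y>0$ on $(0,1)$ and $y(0^+)=y(1^-)=0$, and it asserts solvability provided a sharp integral inequality relating the average of the drift $cg-f$ to the average of $\kappa$ holds. The quantity $K_0=\sup_{\xi\in(0,1)}\fint_0^\xi \kappa(\tau)\tau^{-1/(p-1)}\dtau$ in \eqref{notation} is precisely the one appearing in that criterion (after the usual change of variables that trades the singular term for the weight $\tau^{-1/(p-1)}$), so the hypothesis $\inf_\xi \fint_0^\xi(cg-f) > p'(p-1)^{1/p}K_0^{1/p'}$ is exactly the strict version of the condition needed to invoke it.

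The key steps, in order, are: (i) verify that under the continuity assumptions on $f,g,\kappa$ together with $\kappa(0)=\kappa(1)=0$, $\kappa>0$ on $(0,1)$, the structural hypotheses \eqref{ip:hatC}--\eqref{ip:kappa} hold with $\Theta=\emptyset$ — in particular $\kappa\in L^1(0,1)$ follows from continuity on the compact interval $[0,1]$, and $d=(\kappa/h)^{p-1}$ inherits the required local positivity/boundedness away from the endpoints; (ii) translate the hypothesis on $\fint_0^\xi(cg-f)$ into the exact form of the assumption in the cited theorem of \cite{M}, checking the constants $p'$, $(p-1)^{1/p}$, and the exponent $1/p'$ match after any normalization used there; (iii) apply that theorem to obtain a solution $y\in C[0,1]\cap C^1(0,1)$ of \eqref{odeprob2}; and (iv) note that $C^1((0,1)\setminus\Theta)=C^1(0,1)$ here, so $y$ is a solution in the sense of Proposition \ref{prop:equiv} and the proof is complete.

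The main obstacle — really the only non-routine point — is step (ii): making sure the integral criterion is stated in \cite{M} in a form literally equivalent to the one here, including whether \cite{M} uses $\inf_\xi \fint_0^\xi(cg-f)$ or a pointwise condition, and whether its $K_0$-type constant is defined with the same weight $\tau^{-1/(p-1)}$ rather than, say, an equivalent expression after integration by parts. If \cite{M} states the result with non-strict inequality or a slightly different normalization, one extra line is needed: either observe that a strict inequality certainly implies the non-strict hypothesis, or perform the elementary substitution $s = y\,\xi^{-1}$ (as in Lemma \ref{lem:nuovo}) to rewrite the equation in the normalized form used there. Everything else is bookkeeping: the continuity and positivity of $\kappa$ on $(0,1)$ with vanishing at the endpoints is precisely the setting of \cite{M}, and no regularization or $\Gamma$-convergence machinery is needed at this stage — those tools enter only later, to transfer this proposition to the discontinuous case.
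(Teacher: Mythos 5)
There is a genuine gap, and it sits exactly where you flagged the ``only non-routine point.'' The result the paper actually imports from \cite{M} is not an existence theorem stated with the threshold $p'(p-1)^{\frac{1}{p}}K_0^{\frac{1}{p'}}$ that you could invoke after matching constants; it is \cite[Corollary 1]{M}, a lower-solution--type solvability principle for the singular first-order BVP. Consequently the substantive content of Proposition \ref{p:exten} is precisely the bridge you relegate to ``bookkeeping'': one must \emph{produce} an integral lower-solution from the averaged hypothesis. The paper does this by setting $\beta:=\inf_{\xi\in(0,1)}\fint_0^\xi (cg(\tau)-f(\tau))\dtau$, observing that $M(t):=t^{\frac{p}{p-1}}-\beta t^{\frac{1}{p-1}}+K_0$ attains its minimum at $\lambda=\beta/p$ and that the strict inequality in the hypothesis forces $M(\lambda)<0$, and then checking that the linear function $\psi(\xi):=\lambda\xi$ satisfies
\begin{equation*}
\psi(\xi)<\int_0^\xi\Bigl(cg(\tau)-f(\tau)-\frac{\kappa(\tau)}{\psi(\tau)^{\frac{1}{p-1}}}\Bigr)\dtau
\quad\text{for all }\xi\in(0,1),
\end{equation*}
using $\fint_0^\xi \kappa(\tau)\tau^{-\frac{1}{p-1}}\dtau\le K_0$. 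Only then does \cite[Corollary 1]{M} yield a solution $y\in C^1(0,1)$ of \eqref{odeprob2}. Your proposed remedies for a possible mismatch (strict versus non-strict inequality, or the substitution $s=y/\xi$ as in Lemma \ref{lem:nuovo}) would not supply this construction, so as written the plan does not prove the proposition; it assumes the conclusion of the very reduction that has to be carried out.

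Two smaller remarks: your step (i) is partly beside the point, since the proposition concerns only the first-order problem \eqref{odeprob2} in terms of $f,g,\kappa$, so no hypotheses on $d$ or $h$ need to be recovered; and your step (iv) is fine but immediate. If you replace step (ii) by the explicit lower-solution argument above (minimize $M$, take $\psi(\xi)=\lambda\xi$ with $\lambda=\beta/p$), your outline becomes the paper's proof.
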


\begin{proof}
Put \(\beta:=\dis\inf_{\xi\in(0,1)} \fint_0^\xi (cg(\tau)-f(\tau)) \dtau  \), the function \(M(t):= t^{\frac{p}{p-1}}-\beta t^{\frac{1}{p-1}} +K_0\), \(t\ge 0\), admits minimum attained at \(\lambda:=\beta/p\) with \[M(\lambda)=
K_0-\frac{1}{p'}\left(\frac{\beta^p}{p}\right)^\frac{1}{p-1}<0.\]
Accordingly, we have
\(\lambda^\frac{p}{p-1} <\beta \lambda^\frac{1}{p-1}-K_0 \), implying \[\lambda<\beta - \frac{K_0}{\lambda^\frac{1}{p-1}} \le \fint_0^\xi (cg(\tau)-f(\tau)) \dtau - \frac{1}{\lambda^\frac{1}{p-1}}\fint_0^\xi \frac{\kappa(\tau)}{\tau^{\frac{1}{p-1}}}\dtau = \fint_0^\xi (cg(\tau)-f(\tau)) \dtau - \fint_0^\xi \frac{\kappa(\tau)}{(\lambda\tau)^{\frac{1}{p-1}}}\dtau\]
for all $\xi\in(0,1)$. Therefore, put \(\psi(\xi):=\lambda\xi\), we have
\[ \psi(\xi)< \int_0^\xi \left( cg(\tau) -f(\tau) -   \frac{\kappa(\tau)}{\psi(\tau)^{\frac{1}{p-1}}}\right)\dtau,\]
that is, \(\psi\) is an integral lower-solution of problem \eqref{odeprob2}.
By \cite[Corollary 1]{M} we conclude that problem \eqref{odeprob2} admits a solution \(y\in C^1(0,1)\).
    
\end{proof}


\bigskip

Now we are able to prove our main results concerning the existence of t.w.s.'s.

\begin{thm}
\label{t:mainthm1}  
According to the notations \eqref{notation2}, we have:
\begin{enumerate}[label={{\rm (\arabic*)}}]
\item \label{thm1case1} if $\ell_{p}=+\infty$, then there exist no t.w.s.'s for any $c\in\R$.
\item \label{thm1case3} if $\ell_{p}<+\infty$ and
\begin{equation}
\label{nonexcond}
cg(0)-f(0) < p'[\ell_p(p-1)]^{\frac{1}{p}},
\end{equation}
then there exist no t.w.s.'s having speed $c$;

\item \label{thm1case2} if $L_{p}<+\infty$ and
\begin{equation}
\label{excond}
\inf_{\xi\in(0,1)} \fint_0^\xi (cg(\tau)-f(\tau)) \dtau > p'(p-1)^{\frac{1}{p}}\left(\sup_{\xi\in(0,1)} \fint_0^\xi \frac{\kappa(\tau)}{\tau^{\frac{1}{p-1}}} \dtau\right)^{\frac{1}{p'}},
\end{equation}
where \(\kappa\) is defined by \eqref{ip:kappa}, then there exists a t.w.s. having speed $c$ and it is unique, up to shifts.

\end{enumerate}

\end{thm}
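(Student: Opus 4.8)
The plan is to prove the three cases by reducing, via Proposition~\ref{prop:equiv}, to the first-order problem \eqref{odeprob2}, and then invoking the necessary condition of Lemma~\ref{lem:nuovo} for (1)--(2) and an approximation argument based on Proposition~\ref{p:exten} together with the $\Gamma$-convergence machinery of Section~4 for (3).

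\medskip
\noindent\textbf{Cases \ref{thm1case1} and \ref{thm1case3}.} Suppose a t.w.s.\ of speed $c$ exists. By Proposition~\ref{prop:equiv}, problem \eqref{odeprob2} admits a solution $y$. First I would apply Lemma~\ref{lem:nuovo}: it immediately forces $\lambda=\ell_p^{1/(p-1)}<+\infty$, hence $\ell_p<+\infty$, which settles \ref{thm1case1}. For \ref{thm1case3}, Lemma~\ref{lem:nuovo} also gives $\min_{t\ge0}\eta(t)\le0$ where $\eta(t)=t^{p/(p-1)}-(cg(0)-f(0))t^{1/(p-1)}+\lambda$. A routine computation locates the minimizer of $\eta$ on $[0,+\infty)$ (set $s=t^{1/(p-1)}$, minimize $s^p-(cg(0)-f(0))s+\lambda$; the critical point is $s_*=\big(\tfrac{cg(0)-f(0)}{p}\big)^{1/(p-1)}$ when $cg(0)-f(0)>0$, and the minimum value is $\lambda-\tfrac{1}{p'}\big(\tfrac{(cg(0)-f(0))^p}{p}\big)^{1/(p-1)}$; if $cg(0)-f(0)\le 0$ then $\eta\ge\lambda\ge0$ on $[0,\infty)$, already a contradiction unless $\lambda=0$, which is excluded by \eqref{nonexcond} since its right-hand side is then $0$). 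Imposing $\min\eta\le0$ yields exactly $cg(0)-f(0)\ge p'[\ell_p(p-1)]^{1/p}$ (after substituting $\lambda=\ell_p^{1/(p-1)}$ and simplifying), contradicting \eqref{nonexcond}. Hence no such t.w.s.\ exists.

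\medskip
\noindent\textbf{Case \ref{thm1case2}: existence.} Here the idea is to regularize. Since $L_p<+\infty$, one checks that $\limsup_{\xi\to0^+}\fint_0^\xi\frac{\kappa(\tau)}{\tau^{1/(p-1)}}\dtau<+\infty$ (the integrand is $O(L_p^{1/(p-1)})$ near $0$, up to the $\eps$), so $K_0<+\infty$ and the quantities in \eqref{excond} are finite. Apply Definition~\ref{epsreg} with $A:=\Theta$ to the coefficients $f,g$ and $h$ (and hence $d$), obtaining continuous coefficients $f_\eps,g_\eps,d_\eps,h_\eps$; set $\kappa_\eps:=d_\eps^{1/(p-1)}h_\eps$. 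By Remark~\ref{epsregprops} the linearity of the $\eps$-regularization gives $(cg-f)_\eps=cg_\eps-f_\eps$. The strict inequality \eqref{excond} reads
$$ \inf_{\xi\in(0,1)}\fint_0^\xi(cg(\tau)-f(\tau))\dtau \;>\; p'(p-1)^{1/p}\Big(\sup_{\xi\in(0,1)}\fint_0^\xi\tfrac{\kappa(\tau)}{\tau^{1/(p-1)}}\dtau\Big)^{1/p'}. $$
Proposition~\ref{pr:Gammac} applied to $\phi=cg-f$ (whose $\liminf$ of averages at $0$ equals $cg(0)-f(0)<+\infty$) shows the left-hand infimum passes to the limit along $\eps_n\to0$; applied to $\phi(\tau)=\kappa(\tau)/\tau^{1/(p-1)}$ — which lies in $\hat C(0,1)\cap L^1(0,1)$ and has finite $\limsup$ of averages at $0$ by $L_p<+\infty$, and whose $\eps$-regularization related to $A=\Theta$ is $\kappa_{\eps}/\tau^{1/(p-1)}$ since $1/\tau^{1/(p-1)}$ is continuous — shows the right-hand supremum passes to the limit. (One must check $\D(\kappa/\tau^{1/(p-1)})=\D(\kappa)\subseteq\Theta$ and that multiplying by the continuous factor $\tau^{-1/(p-1)}$ commutes with regularizing over $A=\Theta$ in the relevant sense; this is elementary.) Therefore, for $n$ large, the regularized coefficients $f_{\eps_n},g_{\eps_n},\kappa_{\eps_n}$ (which are continuous on $[0,1]$ with $\kappa_{\eps_n}(0)=\kappa_{\eps_n}(1)=0$ and $\kappa_{\eps_n}>0$ on $(0,1)$, using \eqref{ip:d}--\eqref{ip:acca}) satisfy the hypothesis of Proposition~\ref{p:exten}, so the regularized problem \eqref{odeprob2} has a solution $y_n\in C^1(0,1)$. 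To pass to the limit, I would use Lemma~\ref{l:lowboun} to get uniform-in-$n$ positive lower bounds for $y_n$ on compact subsets of $(0,1)$ (the hypotheses \eqref{ip:lowboun} on $\kappa_{\eps_n}$ and $cg_{\eps_n}-f_{\eps_n}$ hold uniformly by \eqref{supest}--\eqref{infest}), hence $\dot y_n$ is locally bounded, and an Arzel\`a--Ascoli/diagonal argument extracts a subsequence converging locally uniformly in $(0,1)$ to some $y$; one verifies $y$ solves the ODE in \eqref{odeprob2} on $(0,1)\setminus\Theta$ (passing to the limit in the integral form, using local uniform convergence of the regularized coefficients away from $\Theta$), that $y>0$ on $(0,1)$, and that $y(0^+)=y(1^-)=0$ (the upper bound $y_n(0^+)=0$ plus a uniform control — e.g.\ $y_n(\xi)\le\int_0^\xi(cg_{\eps_n}-f_{\eps_n})$ near $0$ from $\dot y_n\le cg_{\eps_n}-f_{\eps_n}$ — forces $y(0^+)=0$; similarly at $1$). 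Then Proposition~\ref{prop:equiv} converts $y$ into a t.w.s.\ of speed $c$.

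\medskip
\noindent\textbf{Uniqueness up to shifts.} If $y_1,y_2$ are two solutions of \eqref{odeprob2}, I would show $y_1\equiv y_2$ by a comparison/uniqueness argument for the first-order ODE $\dot y=cg-f-\kappa y^{-1/(p-1)}$: on $(0,1)\setminus\Theta$ the right-hand side is (locally) decreasing in $y$ where $y>0$, which by Lemma~\ref{l:lowboun} holds on each compact subinterval, so a standard Gronwall argument started from any interior point gives agreement on all of $(0,1)$ (the finitely many points of $\Theta$ are crossed by continuity of $y_i$); the boundary conditions then pin down nothing further but are automatically matched. Translating back via $w(\xi)=-\int_{1/2}^\xi(d/y)^{1/(p-1)}$ in Proposition~\ref{prop:equiv}, the profile $v$ is determined up to the free additive constant coming from the choice of basepoint $\tfrac12$, i.e.\ up to a shift in $z$. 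Alternatively one may quote the uniqueness statement of \cite{M} for the regularized problems and pass to the limit, but the direct monotonicity argument is cleaner.

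\medskip
\noindent\textbf{Main obstacle.} The delicate point is the $\eps\to0$ passage in case \ref{thm1case2}: specifically, guaranteeing that the existence threshold in Proposition~\ref{p:exten} is preserved in the limit. This is exactly where Proposition~\ref{pr:Gammac} (hence the inf-stability of $\Gamma$-convergence, Lemma~\ref{inf-stab}) is essential, and one must be careful that the finiteness assumptions ($\liminf$/$\limsup$ of the averages at $0$ being finite) are genuinely available — this is where $L_p<+\infty$ enters for the $\kappa$-term and where $f,g\in L^\infty$ (so $cg(0)-f(0)$ finite) enters for the convection term. A secondary technical nuisance is ensuring the extracted limit $y$ does not degenerate (vanish) in the interior, which is handled uniformly by Lemma~\ref{l:lowboun}, and that it still attains the correct boundary values at $0$ and $1$.
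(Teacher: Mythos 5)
Your treatment of cases \ref{thm1case1} and \ref{thm1case3} (Proposition \ref{prop:equiv} plus Lemma \ref{lem:nuovo} and the elementary minimization of $\eta$) coincides with the paper's. The problems are in case \ref{thm1case2}. First, you assert that the regularized coefficient $\kappa_{\eps_n}$ is continuous on $[0,1]$ with $\kappa_{\eps_n}(0)=\kappa_{\eps_n}(1)=0$, so that Proposition \ref{p:exten} applies. This is unjustified: the $\eps$-regularization of Definition \ref{epsreg} only modifies a function near its \emph{interior} discontinuity points, so near $\xi=1$ your $\kappa_{\eps_n}$ coincides with $\kappa=d^{\frac{1}{p-1}}h$, and hypotheses \eqref{ip:hatC}--\eqref{ip:kappa} allow $\kappa$ to be unbounded (in particular not to vanish) as $\xi\to1^-$; only $\kappa\in L^1(0,1)$ is assumed, a situation the paper explicitly singles out as one of its novelties. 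Hence the hypotheses of Proposition \ref{p:exten} may fail for your regularized problem. The paper's proof contains an extra idea you are missing: it regularizes $\psi(\tau)=\kappa(\tau)\tau^{-\frac{1}{p-1}}$ directly (not $d$, $h$ separately; note also that the $\eps$-regularization does \emph{not} commute with products or with multiplication by the continuous factor $\tau^{-\frac{1}{p-1}}$, contrary to your parenthetical claim) and then truncates it near $0$ and $1$ by $\tilde\psi_\eps:=\min\{\psi_\eps,\,\text{linear}\}$, so that $\tilde\kappa_\eps:=\tilde\psi_\eps\,\xi^{\frac{1}{p-1}}$ is continuous on $[0,1]$, vanishes at the endpoints, and satisfies $\tilde\psi_\eps\le\psi_\eps$, which preserves the strict inequality coming from \eqref{excond} via Proposition \ref{pr:Gammac}. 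Related to this, your argument for $y(1^-)=0$ (``similarly at $1$'') fails: integrating $\dot y_\eps\le H_\eps$ from $\xi$ to $1$ yields a \emph{lower} bound for $y_\eps(\xi)$, since the singular term has the unfavourable sign there. The paper needs a separate computation, namely $y_\eps(\xi)^{p'}=p'\int_\xi^1\bigl[\tilde\kappa_\eps(\tau)-H_\eps(\tau)y_\eps(\tau)^{\frac{1}{p-1}}\bigr]\dtau\le p'\int_\xi^1\psi(\tau)\dtau$, together with $\psi\in L^1(0,1)$.

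The uniqueness argument is also flawed. The right-hand side $cg(\xi)-f(\xi)-\kappa(\xi)y^{-\frac{1}{p-1}}$ is \emph{increasing} in $y$ (for $\kappa>0$), not decreasing, so the sign in your Gronwall/comparison claim is wrong; and in any case IVP-type uniqueness ``started from any interior point'' cannot give uniqueness of the boundary value problem, because two distinct solutions of \eqref{odeprob2} need not agree at any interior point (nor does uniqueness for the regularized problems pass to the limit). The paper argues differently: if $y_1(\xi_0)<y_2(\xi_0)$, take the maximal interval $(a,b)$ on which $y_1<y_2$; the boundary conditions force $y_1(a^+)=y_2(a^+)$ and $y_1(b^-)=y_2(b^-)$, and integrating the equation over $(a,b)$ gives $y_1(b^-)-y_1(a^+)<y_2(b^-)-y_2(a^+)$, a contradiction. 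You should repair your proof along these lines (or, equivalently, exploit the monotonicity of the difference $y_2-y_1$ on the set where it is positive together with the boundary condition, which is again where the boundary data enter essentially).
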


\begin{proof}
The conclusion in case \ref{thm1case1} follows directly from Proposition \ref{prop:equiv} and Lemma \ref{lem:nuovo}. As for 
 the case \ref{thm1case3}, simple calculations show that assumption \eqref{nonexcond} ensures that the function \(\eta(t)\) involved in Lemma \ref{lem:nuovo} is positive for every $t\geq 0$. So, no solutions to problem \eqref{odeprob2} exist.

So, from now on consider 
the case \ref{thm1case2}.  

Let us fix \(c \in \R\) fulfilling \eqref{excond} and set
\[\psi(\tau):= \frac{\kappa(\tau)}{\tau^{\frac{1}{p-1}}}, \quad \tau\in (0,1).\]
By \eqref{ip:hatC} we have \(\psi\in \hat C(0,1)\); moreover, since $L_p<+\infty$, by \eqref{ip:kappa} we get \(\psi\in L^1(0,1)\).
\newline
Moreover, put \(A:=\D(g)\cup \D(f)\), 
let us consider the \(\eps-\)regularizations \(g_\eps^{(A)}\), \(f_\eps^{(A)}\) and \(\psi_\eps\) obtained as in Definition \ref{epsreg}. Set 
$H_\eps(\xi):=cg_\eps^{(A)}(\xi)-f_\eps^{(A)}(\xi)$, for all $\xi\in[0,1]$.
By Remark \ref{epsregprops} we get \(H_\eps(\xi)= (cg(\xi)-f(\xi))_\eps^{(A)}\), as well as
\begin{equation}
\label{locunifconv1}
H_\eps(\xi)\to cg(\xi)-f(\xi) \quad \mbox{and} \quad \psi_\eps\to \psi\quad \mbox{locally uniformly in} \;\; (0,1)\setminus\Theta \quad \text{as } \eps \to 0.
\end{equation}
Furthermore, since \(\kappa\) is bounded in every closed  \(I\subset (0,1)\), the same holds true for \(\psi\). Hence, since $L_p<+\infty$, by Proposition \ref{pr:Gammac} we have



\begin{equation*}
\begin{aligned}
\lim_{\eps\to0} \inf_{\xi\in(0,1)} \fint_0^\xi H_\eps(\tau)\dtau &= \inf_{\xi\in(0,1)} \fint_0^\xi (cg(\tau)- f(\tau))\dtau, \\
\lim_{\eps\to0} \sup_{\xi\in(0,1)} \fint_0^\xi \psi_\eps(\tau)\dtau &= \sup_{\xi\in(0,1)} \fint_0^\xi \psi(\tau)\dtau.
\end{aligned}
\end{equation*}
Then, by \eqref{excond}, there exists \(\eps^*>0\) such that
\beq \inf_{\xi\in(0,1)} \fint_0^\xi H_\eps(\tau)\dtau > p'(p-1)^{\frac{1}{p}}\left(\sup_{\xi\in(0,1)} \fint_0^\xi \psi_\eps(\tau)\dtau\right)^{\frac{1}{p'}} \quad \text{ for all } \eps\in(0,\eps^*). \label{eq:interm}\eeq
Let us define, for every $\eps\in(0,\eps^*)$,
\[ \tilde \psi_\eps(x):=\begin{cases} 
\min\{\psi_\eps(x), \frac{\psi_\eps(\eps)}{\eps}x\} & \text{ for } 0\le x<\eps, \\
\psi_\eps(x) & \text{ for } \eps\le x\le 1-\eps,  \\   \min\{\psi_\eps(x), \frac{\psi_\eps(1-\eps)}{\eps}(1-x)\} & \text{ for } 1-\eps<x\le 1.  \end{cases}\]
Of course, \(\tilde \psi_\eps\) is continuous in \([0,1]\) with \(\tilde\psi_\eps(0)=\tilde\psi_\eps(1)=0\).
Since \(\tilde \psi_\eps(x)\le \psi_\eps(x)\) for every \(x\in [0,1]\), by \eqref{eq:interm} we have 

\beq \inf_{\xi\in(0,1)} \fint_0^\xi H_\eps(\tau)\dtau > p'(p-1)^{\frac{1}{p}}\left(\sup_{\xi\in(0,1)} \fint_0^\xi \tilde\psi_\eps(\tau)\dtau\right)^{\frac{1}{p'}} \quad \text{ for all } \eps\in(0,\eps^*). \label{eq:final}\eeq

Put now 
\[ \tilde \kappa_\eps(\xi):= \tilde \psi_\eps(\xi) \xi^\frac{1}{p-1}.\]
Of course, \(\tilde \kappa_\eps\) is continuous in \([0,1]\) and, by \eqref{locunifconv1}, \(\tilde \kappa_\eps \to \kappa\)
locally uniformly in \((0,1)\setminus \Theta\) as \(\eps\to 0\).
Therefore, by \eqref{eq:final},
as a consequence of Proposition \ref{p:exten}, for any $\eps\in(0,\eps^*)$ there exists a solution $y_\eps\in C^1(0,1)$ to
\begin{equation}
\label{eq:regodeprob}
\left\{
\begin{alignedat}{2}
\dot y &= cg_\eps^{(A)}(\xi)-f_\eps^{(A)}(\xi)-\frac{\tilde\kappa_\eps(\xi)}{y^{\frac{1}{p-1}}} \quad &&\mbox{in} \;\; (0,1), \\
y&>0 \quad &&\mbox{in} \;\; (0,1), \\
y(0)&=y(1)=0.
\end{alignedat}
\right.
\end{equation}

Let us now prove that the solutions \(y_\eps\) are equi-continuous in any compact interval $I\subset (0,1)$. To this aim, for every \(r\in(0, \frac12) \) put \(I_r:=[r,1-r]\) and let \(r_0<\frac12\) be such that \(\Theta\subset I_{r_0}\).

Let us fix \(r<\frac12 r_0\) and 
put \(M:=\|cg-f\|_\infty\) and \(m_r:=r^{\frac{1}{p-1}}\inf_{I_{r}} \psi\).
By \eqref{supest} we have \(\|H_\eps\|_\infty\le M\), while \eqref{infest} entails \(\inf_{I_r} \psi_\eps\ge \inf_{I_{r}} \psi\).
So, \(\inf_{I_{r}} \tilde\kappa_\eps \ge m_r\).  Hence, 
we can apply Lemma \ref{l:lowboun} to deduce that 
there exists a positive \(\delta_{r}\) such that 
\begin{equation}
\label{eq:lowerbound}
y_\eps(\xi)\geq \delta_{r} \quad \mbox{for all} \;\; \xi\in I_{2r} \;\; \mbox{and} \;\; \eps\in(0,\eps^*).
\end{equation}
Moreover, notice that \(\psi\in L^\infty (I_{2r})\), and
\begin{equation*}
\tilde \kappa_\eps(\xi) < \tilde \psi_\eps(\xi)\le \psi_\eps(\xi) \le \|\psi\|_{L^\infty(I_{2r})} \quad \text{ for every } \xi \in I_{2r}.
\end{equation*}
Thus, 
\begin{equation}
\label{derest}
 |\dot y_\eps(\xi)|\le M+ \frac{\|\psi\|_{L^\infty(I_{2r})} }{\delta_r^{\frac{1}{p-1}}} \quad \text{ for every } \xi \in I_{2r},
\end{equation}
implying that  $\{y_\eps\}$ is equi-continuous in $I_{2r}$ (see Remark \ref{r:AC}).

Moreover,
notice that 
\beq  \label{eq:equiboun} 0\le y_\eps(\xi) = \int_0^\xi \dot y_\eps(\tau) \dtau \le \int_0^\xi H_\eps(\tau)\dtau \le \|H_\eps\|_\infty \xi \le M \quad \mbox{ for any } \xi  \in (0,1),  \eeq
for every \( \eps<\bar \eps\), so $\{y_\eps\}$ is also equi-bounded in $I_{2r}$.
Therefore, up to sub-sequences, Ascoli-Arzelà's Theorem ensures $y_\eps\to \hat y_r$ uniformly in $I_{2r}$ for some $\hat y_r\in C(I_{2r})$. By using a diagonal argument we achieve the existence of $y\in C(0,1)$ such that, up to a subsequence,
\begin{equation}
\label{eq:locunifconv2}
y_\eps\to y \quad \mbox{locally uniformly in} \;\; (0,1).
\end{equation}

Finally, according to \eqref{eq:regodeprob}, \eqref{eq:lowerbound}, and
\eqref{eq:locunifconv2}, 
   for every $\xi_0,\xi\in (0,1)$ we have 
\begin{equation*}
\begin{array}{ll}y(\xi)-y(\xi_0)& =\dis \lim_{\eps\to 0}\left[y_\eps(\xi)-y_\eps(\xi_0)\right] = \dis\lim_{\eps\to 0} \int_{\xi_0}^\xi \left[H_\eps(\tau)-\frac{\tilde\kappa_\eps(\tau)}{y_\eps(\tau)^{\frac{1}{p-1}}}\right] \dtau  \\
  & = \dis \int_{\xi_0}^\xi \left[cg(\tau)-f(\tau)-\frac{\kappa(\tau)}{y(\tau)^{\frac{1}{p-1}}}\right] \dtau. \end{array}
\end{equation*}
Hence, we deduce that $y\in C^1((0,1)\setminus\Theta)$ and it is a solution of the differential equation in \eqref{odeprob2}. Moreover, \eqref{eq:lowerbound} guarantees that \(y(\xi)>0\) in \((0,1)\).

\medskip
As for the boundary conditions, note that by  \eqref{eq:equiboun} we have \(0\le y(\xi)\le M \xi\) for every \(\xi \in (0,1)\), whence \(y(0^+)=0\). Moreover, again by \eqref{eq:equiboun}, for every \(\xi \) sufficiently close to 1 we have
\[\begin{alignedat}{2}  y_\eps(\xi)^{p'} & = \dis \int_\xi^1 \left[-\frac{\rm d}{{\rm d}\tau} (y_\eps(\tau)^{p'})\right] \dtau 
=p' \int_\xi^1 [\tilde \kappa_\eps (\tau) - H_\eps(\tau)y_\eps(\tau)^\frac{1}{p-1}] \dtau \le p'\int_\xi^1 \tilde \kappa_\eps (\tau)\dtau \\
& \le p' \dis\int_\xi^1 \psi_\eps(\tau)\tau^{\frac{1}{p-1}}\dtau\le p'\int_\xi^1 \psi_\eps(\tau)\dtau=p'\int_\xi^1 \psi(\tau)\dtau.
\end{alignedat}\]
Thus, since \(\psi\in L^1(0,1)\),  we  deduce \(y(1^-)=0\). Therefore, $y$ solves \eqref{odeprob2}. 

\medskip
Finally, assume by contradiction the existence of two solutions \(y_1, y_2\) of problem \eqref{odeprob2} and assume, w.r., \(y_1(\xi_0)<y_2(\xi_0)\) for some \(\xi_0\in (0,1)\). Let \((a,b)\) be the maximal interval in \((0,1)\), containing \(\xi_0\), where \(y_1(\xi)<y_2(\xi)\). By the boundary conditions in \eqref{odeprob2}, we deduce that \(y_1(a^+)=y_2(a^+)\) and \(y_1(b^-)=y_2(b^-)\). But

\[ \begin{alignedat}{2}
    y_1(b^-)-y_1(a^+)&=\int_a^b \left[ cg(\xi)-f(\xi) - \frac{\kappa(\xi)}{y_1(\xi)^{\frac{1}{p-1}}}\right] \dxi 
    <\int_a^b \left[cg(\xi)-f(\xi) - \frac{\kappa(\xi)}{y_2(\xi)^{\frac{1}{p-1}}}\right] \dxi \\ &= y_2(b^-)-y_2(a^+), \end{alignedat}\]
a contradiction.

\end{proof}

The next result concerns the structure of the set of the admissible wave speeds.

\begin{thm}
\label{t:mainthm2}
Suppose that $g(0)>0$ and \(L_p<+\infty\).
Then, if \eqref{excond} is satisfied for some \(c\in \R\), the set $\C$ (defined in \eqref{def:Gamma-c}) is non-empty and bounded from below. Moreover, \(\C\) admits minimum $c^*$, which fulfills
\begin{equation}
\label{eq:c*cond1}
\inf_{\xi\in(0,1)} \fint_0^\xi (c^*g(\tau)-f(\tau)) \dtau \leq p'(p-1)^{\frac{1}{p}}\left(\sup_{\xi\in(0,1)} \fint_0^\xi \frac{\kappa(\tau)}{\tau^{\frac{1}{p-1}}}\dtau\right)^{\frac{1}{p'}}
\end{equation}
and
\beq \label{eq:c*cond2}
c^* g(0)-f(0) \geq p'(p-1)^{\frac{1}{p}}\ell_p^{\frac{1}{p}}.
\eeq

Finally, if $\int_0^\xi g(\tau)\dtau\ge 0$  in $[0,1]$, then $\C=[c^*,+\infty)$.
\end{thm}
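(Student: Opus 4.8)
The plan is to obtain Theorem~\ref{t:mainthm2} by combining Theorem~\ref{t:mainthm1} with the compactness arguments already developed for its proof. For the non-emptiness of $\C$ I would simply apply Theorem~\ref{t:mainthm1}\ref{thm1case2} to a speed $c$ for which \eqref{excond} holds. For boundedness from below I would use that $\ell_p\le L_p<+\infty$, so that Theorem~\ref{t:mainthm1}\ref{thm1case3} applies and rules out of $\C$ every $c$ with $cg(0)-f(0)<p'[\ell_p(p-1)]^{1/p}$; since $g(0)>0$ this forces $\C\subseteq[(f(0)+p'[\ell_p(p-1)]^{1/p})/g(0),\,+\infty)$. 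Hence $c^*:=\inf\C$ is a finite real number, and once $c^*\in\C$ is known the lower estimate \eqref{eq:c*cond2} will be nothing but the contrapositive of Theorem~\ref{t:mainthm1}\ref{thm1case3}.

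To prove $c^*\in\C$ I would pass to the limit along a minimizing sequence: take $c_n\in\C$ with $c_n\downarrow c^*$ and let $y_n$ solve \eqref{odeprob2} with speed $c_n$. The crucial point is that $\inf_{I_r}\kappa$ does not depend on $n$ while $\sup_{I_r}|c_ng-f|$ is bounded uniformly in $n$ (as $\{c_n\}$ is bounded), so Lemma~\ref{l:lowboun} gives, for every small $r$, a bound $y_n\ge\delta_r$ on $I_{2r}$ uniform in $n$. Together with $0\le y_n(\xi)=\int_0^\xi\dot y_n(\tau)\dtau\le\int_0^\xi(c_ng(\tau)-f(\tau))\dtau\le C\xi$ (cf.\ \eqref{eq:equiboun} and Remark~\ref{r:AC}), this yields equi-boundedness and equi-Lipschitz continuity of $\{y_n\}$ on compact subsets of $(0,1)$; by Ascoli--Arzel\`a and a diagonal argument, up to a subsequence $y_n\to y$ locally uniformly in $(0,1)$, and passing to the limit in the integral form of \eqref{odeprob2} (dominated convergence being legitimate thanks to the uniform positive lower bounds) shows that $y$ solves the differential equation with speed $c^*$ and $y>0$ in $(0,1)$. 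The condition $y(0^+)=0$ comes from $y(\xi)\le C\xi$, and $y(1^-)=0$ follows, exactly as in the proof of Theorem~\ref{t:mainthm1}, from the uniform estimate $y_n(\xi)^{p'}\le p'\int_\xi^1\kappa(\tau)\dtau+p'C(1-\xi)$ together with $\kappa\in L^1(0,1)$. Thus $c^*=\min\C$, and \eqref{eq:c*cond2} holds.

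For \eqref{eq:c*cond1} I would observe that $c\mapsto\Psi(c):=\inf_{\xi\in(0,1)}\fint_0^\xi(cg(\tau)-f(\tau))\dtau$, being an infimum of affine functions of $c$ with slopes bounded by $\|g\|_\infty$, is $\|g\|_\infty$-Lipschitz, hence continuous. If \eqref{eq:c*cond1} failed, then $\Psi(c^*)>p'(p-1)^{1/p}K_0^{1/p'}$ (with $K_0<+\infty$ since $L_p<+\infty$), so by continuity the strict inequality \eqref{excond} would persist for some $c<c^*$, and Theorem~\ref{t:mainthm1}\ref{thm1case2} would give $c\in\C$, contradicting $c^*=\min\C$.

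Finally, assuming $\int_0^\xi g(\tau)\dtau\ge0$ on $[0,1]$, it remains to show that $\C$ is upward closed, for then $\C=[c^*,+\infty)$. Given $c\in\C$ with solution $y$ and any $c'>c$, integrating the equation for $y$ gives $\int_0^\xi(c'g-f-\kappa y^{-1/(p-1)})\dtau=y(\xi)+(c'-c)\int_0^\xi g(\tau)\dtau\ge y(\xi)$, with strict inequality for $\xi$ close to $0$ since $g(0)>0$ makes $\int_0^\xi g(\tau)\dtau>0$ there. Thus $y$ is an integral lower-solution of \eqref{odeprob2} with speed $c'$, strict near the singular equilibrium $0$, and feeding it into the existence machinery of \cite{M} (as in Proposition~\ref{p:exten}) should produce a solution with speed $c'$, i.e.\ $c'\in\C$. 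I expect this last step to be the main obstacle: one must ensure that the lower-solution criterion still applies when strictness is available only near $\xi=0$ (rather than on all of $(0,1)$, as in Proposition~\ref{p:exten}) and, since \cite{M} is stated for continuous coefficients, one has to run the argument through the $\eps$-regularization of Section~4 and pass to the limit exactly as in the proof of Theorem~\ref{t:mainthm1} — the hypothesis $\int_0^\xi g(\tau)\dtau\ge0$ being precisely what keeps the decisive inequality in the correct direction throughout $(0,1)$, while $g(0)>0$ supplies the strictness where the singularity lives.
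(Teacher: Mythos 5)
Up to the final statement, your argument is essentially the paper's: lower boundedness and \eqref{eq:c*cond2} via Theorem \ref{t:mainthm1}\ref{thm1case3}, attainment of $c^*=\min\C$ by taking a minimizing sequence $c_n\downarrow c^*$, using Lemma \ref{l:lowboun} for uniform positive lower bounds on compacta, Ascoli--Arzel\`a plus a diagonal argument, and passing to the limit in the integral identity; and \eqref{eq:c*cond1} by a perturbation-in-$c$ contradiction (your Lipschitz-in-$c$ formulation of $\Psi$ is equivalent to the paper's choice $\delta=\eps/L$ with $L=\sup_\xi\fint_0^\xi g$). All of that is correct.

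The genuine gap is exactly where you flag it: the proof that $\C$ is upward closed. Your plan is to view $y_c$ as an integral lower-solution for the speed $c'>c$ and to ``feed it into \cite{M}'' after $\eps$-regularization, but this does not go through as stated. First, \cite{M} concerns continuous coefficients, so the lower-solution must be a lower-solution for the \emph{regularized} problem; regularizing $cg-f$ and $\kappa$ changes the integrands on the intervals $B_\eps(\gamma_j)$ around interior discontinuities, producing errors of order $\eps$ in $\int_0^\xi$, and your lower-solution inequality has no slack to absorb them: the margin at $\xi$ is $(c'-c)\int_0^\xi g(\tau)\dtau$, which under the mere hypothesis $\int_0^\xi g\ge0$ may vanish at interior points $\xi>\max\Theta$, precisely where the regularization errors live (strictness near $\xi=0$ does not help there, since no regularization occurs near $0$ anyway). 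Second, even granting a non-strict lower-solution, one would have to know that the criterion of \cite[Corollary 1]{M} applies in that generality, which you do not establish. The paper avoids \cite{M} entirely for this step: it sets $\gamma(\xi):=\int_0^\xi\bigl[\bar c g(\tau)-f(\tau)-\kappa(\tau)\,y_{c^*}(\tau)^{-\frac{1}{p-1}}\bigr]\dtau$ (your comparison function), notes $y_{c^*}\le\gamma$ and $\dot\gamma\le \bar c g-f-\kappa\,\gamma^{-\frac{1}{p-1}}$ off $\Theta$, and then works directly with the discontinuous equation: it solves the ODE in \eqref{odeprob2} with speed $\bar c$ through the points $(\xi_n,\gamma(\xi_n))$ with $\xi_n\uparrow1$, shows by a comparison argument that these solutions stay below $\gamma$ to the left and above it to the right, hence extend to $(0,1]$ with value $0$ at $0^+$, and extracts via Lemma \ref{l:lowboun} and a diagonal argument a limit solution $y_{\bar c}$ with $y_{\bar c}(0^+)=0$; if $y_{\bar c}(1^-)>0$, a second family of solutions with terminal data $\zeta_n\downarrow0$ at $\xi=1$, squeezed between $0$ and $y_{\bar c}$, yields a solution vanishing at both endpoints. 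Some such direct construction (or a genuinely justified substitute for the \cite{M} step) is needed to close your argument.
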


\begin{proof}
Since $g(0)>0$, there exists $\hat{c}\in\R$ such that \eqref{nonexcond} is satisfied for all $c<\hat{c}$, so $\C$ is bounded from below by Theorem \ref{t:mainthm1}. Set
\(c^*:=\inf \C\).

First of all, note that \(c^*\) satisfies \eqref{eq:c*cond2}. Indeed, if not, then we would have \eqref{nonexcond} for every \(c\) in a right neighborhood of \(c^*\), implying by Theorem \ref{t:mainthm1} that no t.w.s. having speed \(c\) exist, in contrast with the definition of \(c^*\).

As regards the validity of \eqref{eq:c*cond1}, assume, by contradiction, that for some 
$\eps>0$ we have

\[\inf_{\xi\in(0,1)} \fint_0^\xi (c^*g(\tau)-f(\tau)) \dtau > p'(p-1)^{\frac{1}{p}}\left(\sup_{\xi\in(0,1)} \fint_0^\xi \frac{\kappa(\tau)}{\tau^{\frac{1}{p-1}}} \dtau\right)^{\frac{1}{p'}}+\eps.\]
Then, 
\[\fint_0^\xi (c^*g(\tau)-f(\tau)) \dtau > p'(p-1)^{\frac{1}{p}}\left(\sup_{\xi\in(0,1)} \fint_0^\xi \frac{\kappa(\tau)}{\tau^{\frac{1}{p-1}}} \dtau\right)^{\frac{1}{p'}}+\eps, \quad \text{ for every } \xi\in (0,1).\]
Put \(L:=\dis\sup_{\xi\in (0,1)}\fint_0^\xi g(\tau) \dtau>0\) and set \(\delta:=\eps/L\). Then, for every \(\xi\in (0,1)\) we have 
\[
\begin{aligned}\fint_0^\xi ((c^*-\delta)g(\tau)-f(\tau)) \dtau &= \fint_0^\xi (c^*g(\tau)-f(\tau)) \dtau - \delta\fint_0^\xi g(\tau)\dtau \\ 
&\geq  \fint_0^\xi (c^*g(\tau)-f(\tau)) \dtau - \eps >
p'(p-1)^{\frac{1}{p}}\left(\sup_{\xi\in(0,1)} \fint_0^\xi \frac{\kappa(\tau)}{\tau^{\frac{1}{p-1}}} \dtau\right)^{\frac{1}{p'}}. 
\end{aligned}\]
Hence condition \eqref{excond} is fulfilled for \(c=c^*-\delta\), in contrast with the definition of \(c^*\). So, \eqref{eq:c*cond1} holds true.

\smallskip Let us now prove that \(c^*\in \C\). To this aim, 
let $\{c_n\}\subseteq\C$ be a decreasing sequence converging to $ c^*$ and consider a sequence $\{y_n\}$ of solutions to \eqref{odeprob2} with $c=c_n$, which are given by Theorem \ref{t:mainthm1} and Proposition \ref{prop:equiv}.
Set \(\hat c:=\max |c_n|\). Fix any \(r\in(0,\frac14)\) such that \(\Theta \subset (2r,1-2r)\), and put \(M:=\|f\|_\infty +\hat c\|g\|_\infty\). By virtue of Lemma \ref{l:lowboun} we deduce that there exists a positive
 \(\delta_{r}\) such that 
\beq \label{eq:lowerbound2}
y_n(\xi) \ge \delta_{r} \quad \text{ for every } \xi \in I_{2r}:=[2r,1-2r] \text{ and } n\in \N.
\eeq  
Therefore, we have
\begin{equation*}
|\dot y_n(\xi)|\le \hat c\|g\|_\infty+ \|f\|_\infty+ \frac{1}{\delta_r^{\frac{1}{p-1}}}\int_0^1 \kappa(\tau)\dtau \quad \text{ for every } \xi\in I_{2r}.
\end{equation*}

Thus, reasoning as for \eqref{derest} and \eqref{eq:equiboun}, we get the compactness of $\{y_n\}$ in $C(I_{2r})$ via Ascoli-Arzelà's theorem. Then $y_n\to \hat y_r$ uniformly in $I_{2r}$ for some $\hat y_r\in C(I_{2r})$. Finally, a diagonal argument provides a function $y^*\in C(0,1)$ such that, up to a subsequence, $y_n\to y^*$ locally uniformly in $(0,1)$, with   \(y^*(\xi)>0\) in \((0,1)\) from \eqref{eq:lowerbound2}. 
So, for each fixed \(\xi_0,\xi\in (0,1)\), the sequence \(\{y_n\}\) uniformly converges to \(y^*\) in \([\xi_0,\xi]\), implying 
\[
\begin{aligned}
y^*(\xi)-y^*(\xi_0) &= \lim_{n\to\infty} \left[y_n(\xi)-y_n(\xi_0)\right] = \lim_{n\to\infty} \int_{\xi_0}^{\xi} \left[c_n g(\tau)- f(\tau) - \frac{\kappa(\tau)}{y_n(\tau)^{\frac{1}{p-1}}}\right] \dtau \\
&= \int_{\xi_0}^\xi \left[c^*g(\tau)-f(\tau)-\frac{\kappa(\tau)}{y^*(\tau)^{\frac{1}{p-1}}}\right] \dtau.
\end{aligned}
\]
Therefore, \(y^*\in C^1((0,1)\setminus \Theta)\) and is a solution of the differential equation in \eqref{odeprob2}. Moreover,
arguing as in Theorem \ref{t:mainthm1}, we can prove that \(y^*(0^+)=y^*(1^-)=0\), so that $y^*$  solves \eqref{odeprob2} with $c=c^*$, implying that \(c^*\in \C\).

\smallskip
Finally, let us assume $\int_0^\xi g(\tau)\dtau\geq 0$ for all $\xi\in[0,1]$ and prove that \(\C=[c^*,+\infty)\). To this aim, fix a speed $\bar c>c^*$.

Observe that for every \(\xi \in (0,1)\) we have
\[  y_{c^*}(\xi)=\int_0^\xi \left[c^*g(\tau)-f(\tau)-\frac{\kappa(\tau)}{y_{c^*}(\tau)^{\frac{1}{p-1}}}\right]\dtau \leq \int_0^\xi \left[\bar c g(\tau)-f(\tau)-\frac{\kappa(\tau)}{y_{c^*}(\tau)^{\frac{1}{p-1}}}\right]\dtau=:\gamma(\xi).\]
So, 
\begin{equation*}
\dot\gamma(\xi) = \bar c g(\xi)-f(\xi)-\frac{\kappa(\xi)}{y_{c^*}(\xi)^{\frac{1}{p-1}}} \le  \bar c g(\xi)-f(\xi)-\frac{\kappa(\xi)}{\gamma(\xi)^{\frac{1}{p-1}}} \quad \text{ for every } \xi\in (0,1)\setminus \Theta.
\end{equation*}


Take an increasing sequence \(\{\xi_n\}\) converging to \(1\), with \(\xi_1>\max \Theta\), and  consider, for each \(n \in \N\), the (unique) solution \(y_n\) of the equation in \eqref{odeprob2} with \(c=\bar c\), passing through \((\xi_n , \gamma(\xi_n))\), defined in its maximal existence interval \((\alpha_n, \beta_n)\).

We have \(y_n(\xi)\le \gamma(\xi)\) in \((\alpha_n, \xi_n)\)
  and \(y_n(\xi)\ge \gamma(\xi)\) in \((\xi_n,\beta_n)\).
 Indeed, if  \(y_n(\xi^*)> \gamma(\xi^*)\) for some \(n\in \N\) and \(\xi^*\in (\alpha_n,\xi_n)\) then, put \[\bar \xi:= \sup\{\xi>\xi^*: y_n(\tau)>\gamma(\tau) \text{ for every } \tau\in [\xi^*,\xi]\},\] we have 
 \(y_n(\bar \xi)=\gamma(\bar \xi)\) and
 \begin{equation*}
\begin{alignedat}{2}
 y_n(\bar \xi) - y_n(\xi^*)&=\int_{\xi^*}^{\bar \xi} \dot y_n(\tau)\dtau = \int_{\xi^*}^{\bar \xi} \left(\bar c g(\tau)-f(\tau) - \frac{\kappa(\tau)}{y_n(\tau)^{\frac{1}{p-1}}}\right) \dtau \\
 &> \int_{\xi^*}^{\bar \xi} \left(\bar c g(\tau)-f(\tau) - \frac{\kappa(\tau)}{\gamma(\tau)^{\frac{1}{p-1}}}\right) \dtau \ge \int_{\xi^*}^{\bar \xi} \dot \gamma(\tau)\dtau=\gamma(\bar \xi)-\gamma(\xi^*),    
\end{alignedat}
\end{equation*}
so \(y_n(\xi^*)<\gamma(\xi^*)\), a contradiction. The proof of the inequality in \((\xi_n,\beta_n)\) is analogous.

Therefore, 
\(\alpha_n=0\) and \(y_n(0^+)=0\) for each \(n\): indeed, if \(\alpha_n>0\) for some \(n\) then $y(\alpha_n^+)=0$ and, by the equation in \eqref{odeprob2}, we derive \(\dot y_n(\alpha_n^+)=-\infty\),  a contradiction.
Moreover,  \(\beta_n=1\) and \(y_n(1^-)\ge \gamma(1)\), for every \(n\in \N\). Indeed, if \(\beta_n<1\) for some \(n\) then by the equation in \eqref{odeprob2} we derive \(\dot y_n(\beta_n^-)\leq cg(\beta_n)-f(\beta_n)<+\infty\), with \(y(\beta_n^+)=+\infty\),  a contradiction.

From  Lemma \ref{l:lowboun} we deduce that for every 
\(r\) sufficiently small  there exists \(\delta_r>0 \) such that \(y_n(\xi)\ge \delta_r\) for every \(\xi\in I_{2r}\) and every \(n\in \N\) (see \eqref{eq:lowerpos}). Hence, reasoning as above, the sequence \(\{y_n\}\) is equi-continuous and equi-bounded in each compact interval in \((0,1)\). So, for each \(I\subset (0,1)\) we find a subsequence of \(\{y_n\}\) uniformly convergent in \(I\) to a solution \(y_{\bar c}\) of the equation in \eqref{odeprob2}, and a diagonal argument allows us to extend the solution \(y_{\bar c}\) to the whole interval \((0,1)\). Of course, \(y_{\bar c}(0^+)=0\) and \(y_{\bar c}(\xi)>0\) in \((0,1)\). 

If \(y_{\bar c}(1^-)=0\), then the proof is concluded. Otherwise, if \(y_{\bar c}(1^-)>0\), let us consider a decreasing sequence \(\{\zeta_n\}\) converging to 0, with \(\zeta_1<y_{\overline{c}}(1)\), and the (unique) solution \(\upsilon_n\) of the equation in \eqref{odeprob2} satisfying \(\upsilon_n(1)=\zeta_n\). Similarly to what we observed above, each solution \(\upsilon_n\) is defined in \((0,1]\). By the uniqueness of the solution of the equation \eqref{odeprob2} passing  through a given point in \((0,1)\times (0,+\infty)\), we deduce that \(0<\upsilon_{n+1}(\xi)< \upsilon_n(\xi)<y_{\bar c}(\xi)\) for every \(\xi\in (0,1)\). 
Moreover, by Lemma \ref{l:lowboun} we infer that 
the sequence is also equi-continuous in any compact \(I\subset (0,1)\). So, the sequence is locally uniformly convergent to a function \(\upsilon_{\bar c}\), which is positive in \((0,1)\) by Lemma \ref{l:lowboun}, and is a solution of problem \eqref{odeprob2}.
\end{proof}

\begin{rmk}
\label{finalrmk}
It is worth pointing out that \eqref{eq:c*cond1}--\eqref{eq:c*cond2} imply the estimates \eqref{threshold}, since
$$ \inf_{\xi\in(0,1)} \fint_0^\xi (c^*g(\tau)-f(\tau)) \dtau \geq c^*\inf_{\xi\in(0,1)} \fint_0^\xi g(\tau) \dtau - \sup_{\xi\in(0,1)} \fint_0^\xi f(\tau) \dtau. $$


This inequality, joint to what we observed in Remark \ref{r:AC}, shows that our estimates on $c^*$ are finer than the ones in \cite[Theorem 2.1]{DJKZ}. \\
We also highlight that a dual version of Theorem \ref{t:mainthm2} holds when $g(0)<0$: it suffices to change the sign to both the terms \(c\) and \(g\).
\end{rmk}

\section*{Acknowledgments}

\noindent
The authors are member of the {\em Gruppo Nazionale per l'Analisi Matematica, la Probabilit\`a e le loro Applicazioni}
(GNAMPA) of the {\em Istituto Nazionale di Alta Matematica} (INdAM). The first author acknowledges the support of the INdAM-GNAMPA Project 2024 ``Regolarità ed esistenza per operatori anisotropi'' (E5324001950001). \\
This study was funded by the research project of MIUR (Italian Ministry of Education, University and Research) PRIN 2022 {\it Nonlinear differential problems with applications to real phenomena} (Grant No. 2022ZXZTN2).

\end{document}